\definecolor{blue3}{rgb}{.1,.0,.4}
\declaretheorem[name=Theorem,numberwithin=section]{thm} %Weird thing, the package thmtools ``requires'' it.
\newtheorem*{define*}{Definition}
\newtheorem{define}[thm]{Definition}
\newtheorem*{lemma*}{Lemma}
\newtheorem{lemma}[define]{Lemma}
\newtheorem*{algorithm*}{Algorithm}
\newtheorem*{construction*}{Construction}
\newtheorem*{prop*}{Proposition}
\newtheorem*{obs*}{Observation}
\newtheorem*{fact*}{Fact}
\newtheorem*{remark*}{Remark}
\newtheorem*{quest*}{Question}
\newtheorem*{cor*}{Corollary}
\newtheorem*{conjecture*}{Conjecture}
\newtheorem*{question*}{Question}
\newtheorem*{example*}{Example}
\newtheorem{convention}[define]{Convention}
\numberwithin{claimcounter}{define}
\newtheorem*{claim*}{Claim}
\numberwithin{equation}{section}
\newcommand{\Jac}{\operatorname{Jac}}
\newcommand{\R}{\mathbb{R}}
\newcommand{\Z}{\mathbb{Z}}
\newcommand{\lip}{\operatorname{Lip}}
\newcommand{\N}{\mathbb{N}}
\newcommand{\id}{\operatorname{id}}
\newcommand{\leb}{\mathcal{L}}
\DeclareMathOperator{\diam}{diam}
\newcommand{\mb}[1]{\mathbf{#1}}
\newcommand{\mc}[1]{\mathcal{#1}}
\newcommand{\abs}[1]{\left|#1\right|}
\newcommand{\lnorm}[2]{\left\|#2\right\|_#1}
\newcommand{\Prob}{\mathbb{P}}
\newcommand{\set}[1]{\left\{#1\right\}}
\newcommand{\cl}[1]{\overline{#1}}
\DeclarePairedDelimiter{\floor}{\lfloor}{\rfloor}
\def\XXint#1#2#3{{\setbox0=\hbox{$#1{#2#3}{\int}$ }
		\vcenter{\hbox{$#2#3$ }}\kern-.6\wd0}}
\newcommand{\mylabel}[2]{#2\def\@currentlabel{#2}\label{#1}}
\def\blfootnote{\xdef\@thefnmark{}\@footnotetext} %footnote without a label
\title{Lipschitz constant $\log n$\\ almost surely suffices for\\mapping $n$ grid points onto a cube.}
\author{Michael Dymond}
\begin{document}
	\maketitle
	\begin{abstract}
		Kalu\v za, Kopeck\'a and the author show in \cite{DKK2018}, that the best Lipschitz constant for mappings taking a given $n^{d}$-element set in the integer lattice $\Z^{d}$, with $n\in \N$, surjectively to the regular $n$ times $n$ grid $\set{1,\ldots,n}^{d}$ may be arbitrarily large. However, there remain no known, non-trivial asymptotic bounds, either from above or below, on how this best Lipschitz constant grows with $n$. We approach this problem from a probabilistic point of view. More precisely, we consider the random configuration of $n^{d}$ points inside a given finite lattice and establish almost sure, asymptotic upper bounds of order $\log n$ on the best Lipschitz constant of mappings taking this set surjectively to the regular~$n$~times~$n$~grid~$\set{1,\ldots,n}^{d}$.
	\end{abstract}
\blfootnote{The author acknowledges the support of Austrian Science Fund (FWF): P 30902-N35.}

\section{Introduction}
Fix a dimension $d\geq 2$. For each $n\in\N$, we define a mapping $F_{n}\colon \binom{\Z^{d}}{n^{d}}\to (0,\infty)$ by
\begin{equation*}
	F_{n}(S)=\min\set{\lip(f)\colon f\colon S\to \set{1,\ldots,n}^{d}\text{ surjective}}, \qquad S\in \binom{\Z^{d}}{n^{d}}.
\end{equation*}
We use the notation $\binom{X}{m}$ to denote the set of all $m$-element subsets of a set $X$.

The quantity $F_{n}(S)$ can be thought of as a quantification of how much the set $S$ differs from the regular $n\times n$ grid. In the 1990's Feige asked the question~\cite[Question 2.12]{Matousek_open} of whether the sequence
\begin{equation}\label{eq:Feige_seq}
	\mb{F}_{n}:=\sup_{S\in \binom{\Z^{d}}{n^{d}}}F_{n}(S),\qquad n\in \N,
\end{equation}
is bounded. In other words, Feige's question asks whether there is some absolute constant $L>0$ so that for any $n\in\N$ and any set $S\in \binom{\Z^{d}}{n^{d}}$ there exists a bijection $S\to [n]^{d}$ with Lipschitz constant at most $L$. Feige's question is motivated by his work~\cite{Feige_approx} on the bandwidth problem in computer science.

The paper~\cite{DKK2018} of Kalu\v{z}a, Kopeck\'a and the author provides a negative answer to Feige's question, proving that $\limsup_{n\to\infty}\mb{F}_{n}=\infty$. However, \cite{DKK2018} fails to impose any non-trivial asymptotic bounds on the Feige sequence $(\mb{F}_{n})$; the only available inequality of note is the trivial upper bound $\mb{F}_{n}\leq \sqrt{d}\cdot n$. This provides the motivation for the present work. 

In order to prove that $\limsup_{n\to\infty}\mb{F}_{n}=\infty$, \cite{DKK2018} uses a strategy introduced by McMullen~\cite{McM} and Burago and Kleiner~\cite{BK1} which allows for the translation of discrete Lipschitz problems to the continuous setting. The aforementioned authors developed this strategy in order to answer a long standing open question of Gromov~\cite{Grom}, namely whether every two separated nets in Euclidean space are bilipschitz equivalent. McMullen~\cite{McM} and Burago and Kleiner~\cite{BK1} introduce methods of encoding measurable density functions $\rho\colon [0,1]^{d}\to (0,\infty)$ as separable nets in $\R^{d}$ and use these to prove that Gromov's question about separated nets is equivalent to the question of whether every bounded density $\rho\colon [0,1]^{d}\to (0,\infty)$ admits a bilipschitz solution $f\colon [0,1]^{d}\to \R^{d}$ to the pushforward equation
\begin{equation}\label{eq:pushforward}
	f_{\sharp}\rho\leb|_{[0,1]^{d}}=\leb|_{f([0,1]^{d})}.
\end{equation}
McMullen~\cite{McM} and Burago and Kleiner~\cite{BK1} then resolve Gromov's question negatively by constructing $\rho$ for which \eqref{eq:pushforward} has no bilipschitz solutions. For the negative answer to Feige's question, Kalu\v za, Kopeck\'a and the author constructed $\rho$ so that \eqref{eq:pushforward} additionally has no solutions in the larger class of Lipschitz mappings $f\colon [0,1]^{d}\to \R^{d}$. Moreover, in a recent paper \cite{DymondKaluza2019}, Kalu\v za and the author find densities $\rho$ for which \eqref{eq:pushforward} has no solutions in the class of homeomorphisms $f\colon [0,1]^{d}\to \R^{d}$ for which both $f$ and $f^{-1}$ have modulus of continuity bounded above by $\omega(t)=t\log\left(\frac{1}{t}\right)^{\varphi_{0}(d)}$, where $\varphi_{0}(d)\to 0$ as $d\to\infty$.

Assuming a connection between the question of the asymptotic growth of the sequence $(\mb{F}_{n})$ and the continuous question of existence of solutions $f$ to \eqref{eq:pushforward} with prescribed modulus of continuity $\omega$, the latter result may hint towards an asymptotic lower bound of the form 
\begin{equation}\label{eq:asymp_lower_bound_feige}
	\mb{F}_{n}\geq (\log n)^{\varphi_{0}(d)}
\end{equation}
on the Feige sequence $(\mb{F}_{n})$. More precisely, it seems natural to conjecture that the inequality $\mb{F}_{n}\geq n\omega\left(\frac{1}{n}\right)$ holds asymptotically for moduli of continuity $\omega$ for which $\omega$-continuous solutions $f$ to \eqref{eq:pushforward} may be excluded.

In this note we identify certain types of sets $S\in\binom{\Z^{d}}{n^{d}}$ for which we are able to provide a non-trivial asymptotic upper bound on $F_{n}(S)$. Furthermore, we show that these sets occur with high probability, in a sense to be made precise shortly. We hope that this could be a step towards establishing bounds on the Feige sequence $(\mb{F}_{n})$. Note that the latter requires bounding $F_{n}(S)$ for a general set $S\in \binom{\Z^{d}}{n^{d}}$.

To determine the $n$-th Feige number $\mb{F}_{n}$, observe that it suffices to consider only sets $S\in \binom{\Z^{d}}{n^{d}}$ which lie inside the finite cubic grid $\set{1,\ldots,n^{d}}^{d}$ of side length $n^{d}$. Put differently, the supremum in \eqref{eq:Feige_seq} remains unchanged if the integer lattice $\Z^{d}$ is replaced by the finite grid $\set{1,\ldots,n^{d}}^{d}$. This holds because any set $S\in \binom{\Z^{d}}{n^{d}}$ may be mapped via a $1$-Lipschitz, injective mapping to a subset of $\set{1,\ldots,n^{d}}^{d}$: simply take out empty hyperplanes, contract and translate. Thus, to establish asymptotic bounds on the Feige sequence $(\mb{F}_{n})$ it suffices to provide asymptotic bounds on $F_{n}(S)$ for sets $S\in \binom{\set{1,\ldots,n^{d}}^{d}}{n^{d}}$. 

Restricting our attention to configurations of $n^{d}$ points inside a finite cubic grid, instead of inside the entire integer lattice, naturally invites a probabilistic approach. We can think of each possible configuration of the $n^{d}$ points in the finite cubic grid as occurring with equal probability.  Taking large cubic grids, such as the grid $\set{1,\ldots,n^{d}}^{d}$ discussed above, we would expect to see configurations of $n^{d}$ points being very spread out with high probability, leading to $F_{n}$ being uniformly bounded independent of $n$ with high probability. Thus, it makes sense to consider the problem in all smaller cubic grids of side length $\floor{cn}$ for all $c>1$. Inside grids $\set{1,\ldots,\floor{c_{n}n}}^{d}$ given by a sequence of numbers $c_{n}>1$ which does not converge to $1$ too quickly, we will prove an upper bound on $F_{n}$ which holds asymptotically almost surely. Note that for all sequences $(c_{n})$ with $1<\inf c_{n}\leq \sup c_{n} <\infty$, the methods of \cite{DKK2018} establish that there are sequences $(S_{n})\in \prod_{n=1}^{\infty} \binom{\set{1,\ldots,\floor{c_{n}\cdot n}}^{d}}{n^{d}} $ for which $\limsup_{n\to\infty} F_{n}(S_{n})=\infty$. There is less known in  the case of sequences $(c_{n})$ converging to $1$. It is not known for which sequences $c_{n}\searrow 1$ the sequence space $\prod_{n=1}^{\infty} \binom{\set{1,\ldots,\floor{c_{n}\cdot n}}^{d}}{n^{d}}$ contains a sequence $(S_{n})$ such that $\limsup_{n\to \infty}F_{n}(S_{n})=\infty$.

The notion of `asymptotically almost surely' refers to the uniform probability measures on the spaces $\binom{\set{1,\ldots,\floor{c_{n}\cdot n}}^{d}}{n^{d}}$ for $n\in\N$. In the present work we only consider one type of probability space, namely that given by a finite set equipped with the uniform probability measure. If $X$ is a finite, non-empty set, we consider the uniform probability measure on $(X,2^{X})$ defined by 
\begin{equation}\label{eq:P_X}
	\Prob_{X}(A)=\frac{\abs{A}}{\abs{X}}, \qquad A\subseteq X,
\end{equation}
where $\abs{-}$ denotes the cardinality. Since it will always be clear from the context which probability space we are working in, we will always just write $\Prob$ (without a subscript) to denote the uniform probability measure.

We are now ready to state the main result:
\begin{restatable}{thm}{aas}\label{thm:aas}
	Let $d\in\N$ with $d\geq 2$ and $q\in \R$ with $q\geq 1$ and $q>\frac{3}{d}$. For each $n\in\N$ let $c_{n}\geq \left(1+\frac{2^{d+7}}{\log n}\right)^{1/d}$, $\Omega_{n}:=\binom{\set{1,\ldots,\floor{c_{n}\cdot n}}^{d}}{n^{d}}$ and consider the probability space $(\Omega_{n},2^{\Omega_{n}},\Prob=\Prob_{\Omega_{n}})$ defined by \eqref{eq:P_X} and the random variable $F_{n}\colon \Omega_{n}\to(0,\infty)$ defined by
	\begin{equation*}
		F_{n}(S)=\min\set{\lip(f)\colon f\colon S\to \set{1,\ldots,n}^{d}\text{ surjective}}, \qquad S\in \Omega_{n}.
	\end{equation*}  Then there exists a constant $\Gamma=\Gamma(d)>0$ such that 
	\begin{equation*}
		\Prob\left[F_{n}\leq \Gamma (\log n)^{q}\right]\geq 1-\Gamma n^{\Gamma}\exp\left(-\frac{(\log n)^{qd-2}}{\Gamma}\right).
	\end{equation*}
	In particular, we have that
	\begin{equation*}
		\lim_{n\to\infty}\Prob\left[F_{n}\leq \Gamma (\log n)^{q}\right]=1.
	\end{equation*}
\end{restatable}
Theorem~\ref{thm:aas} tentatively supports the conjecture of  a polylogarithmic upper bound on the Feige sequence $(\mathbf{F}_{n})$. This is interesting because it coincides in form with the conjectured lower bound \eqref{eq:asymp_lower_bound_feige}, coming from the completely independent results of \cite{DymondKaluza2019}. 

\section{Preliminaries, Convention and Notation.}\label{sec:prelim_notation}
Let us quickly summarise some basic notation which may not be completely standard. The dimension $d$ of the Euclidean space $\R^{d}$ in which we work will be considered fixed throughout the whole paper. Thus, many objects defined in the paper should be thought of as having a suppressed subindex $d$; for example $F_{n}=F_{n,d}$. For a set $A$ and $k\in\N$ we let $\binom{A}{k}$ denote the set of all subsets of $A$ with precisely $k$ elements. Given $t\geq 0$ we write $\floor{t}$ for the integer part of $t$ and $[t]$ for the set of integers $\set{1,2,\ldots,\floor{t}}$. Since powers of $2$ arise frequently in the calculations we take, for convenience, the logarithm function $\log$ with base $2$. We will also write $\exp(x)$ to denote $2^{x}$.
\begin{convention}\label{conv:randomset}
	Let $X$ be a finite set, $N\in \N$, $\dagger$ stand for an abstract property and $\alpha\geq 0$. In Sections~\ref{sec:random} and \ref{sec:proof}, we write that a random set $S\in \binom{X}{N}$ satisfies
	\begin{equation*}
	\Prob[\text{$S$ has property $\dagger$}]\leq \alpha
	\end{equation*}
	as shorthand for the statement
	\begin{equation*}
	\Prob\left(\set{S\in \binom{X}{N}\colon \text{$S$ has property $\dagger$}}\right)\leq \alpha
	\end{equation*}
	in the probability space $\left(\binom{X}{N},2^{\binom{X}{N}},\Prob=\Prob_{\binom{X}{N}}\right)$ given by \eqref{eq:P_X}.
\end{convention}
The symbol $\leb$ will denote the Lebesgue measure. Furthermore, given a measurable function $\rho\colon [0,1]^{d}\to[0,\infty)$ we denote by $\rho\leb$ the measure on $[0,1]^{d}$ defined by $\rho\leb(A)=\int_{A}\rho\,d\leb$. If $\mu$ is a measure on $[0,1]^{d}$ and $f\colon [0,1]^{d}\to \R^{d}$ is a mapping, we will denote by $f_{\sharp}\mu$ the measure on $f([0,1]^{d})$ defined by $f_{\sharp}\mu(A)=\mu({f^{-1}(A)})$. The closure of a set $E$ will be written as $\cl{E}$. 

Given a measure $\mu$ on $[0,1]^{d}$, we call a collection  $\mc{T}$ of $\mu$-measurable subsets of $[0,1]^{d}$ a $\mu$-partition of $[0,1]^{d}$ if $\mu\left([0,1]^{d}\setminus \bigcup \mc{T}\right)=0$ and $\mu(T\cap T')=0$ for all $T,T'\in\mc{T}$ with $T\neq T'$. 
For each $k\in\N$ we let
\begin{equation}\label{def:Tk}
	\mc{T}_{k}=\set{ \prod_{i=1}^{d}\left(\frac{p_{i}}{k},\frac{p_{i}+1}{k}\right]\colon p_{1},\ldots,p_{d}\in \set{0,1,\ldots,k-1}},
\end{equation}
Note that each $\mc{T}_{k}$ is, in particular, an $\leb$-partition of $[0,1]^{d}$.

The next lemma is our main mechanism for relating measures to the question of best Lipschitz constants for mappings of finite sets.
\begin{lemma}\label{lemma:cts_to_discrete}
	Let $\mu,\nu$ be Borel probability measures on the unit cube $[0,1]^{d}$. Let $n\in\N$, $\mc{T}$ be a finite $\mu$-partition of $[0,1]^{d}$, $c>1$ and 
	\begin{equation*}
		X\subseteq \frac{1}{c n}\Z^{d}\cap \bigcup \mc{T}, \qquad Y\subseteq \frac{1}{n}\Z^{d}\cap [0,1]^{d}
	\end{equation*}
	be finite sets such that 
	\begin{align}
		\mu(T)&\geq \frac{1}{n^{d}}\abs{X\cap T} ,\qquad \text{ for every $T\in\mc{T}$, and}\label{lower_bound_mu}\\
		\nu(E)&\leq\frac{1}{n^{d}}\abs{\set{y\in Y\colon d_{\infty}(y,E)\leq \frac{1}{n}}}\qquad \text{for every $\nu$-measurable $E\subseteq [0,1]^{d}$,} \label{upper_bound_nu}
	\end{align}
	where $d_{\infty}$ denotes the distance induced by the norm $\lnorm{\infty}{-}$. Let $f\colon [0,1]^{d}\to [0,1]^{d}$ be a Lipschitz mapping with $f_{\sharp}\mu=\nu$. Then there exist a constant $\Lambda=\Lambda(d)>0$ and an injective mapping $g\colon X\to Y$ with
	\begin{equation*}
		\lip(g)\leq \Lambda\max\set{1,\lip(f)}c \left(n\cdot\max_{T\in\mc{T}}\diam T+1\right).
	\end{equation*}
\end{lemma}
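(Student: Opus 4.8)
The plan is to discretise the continuous pushforward map $f$ by routing points of $X$ through the cells of $\mc{T}$ and the regular grid on which $\nu$ lives, and to control the Lipschitz constant of the resulting finite map by comparing discrete distances with the diameters of those cells. Concretely, I would first fix, for each $T\in\mc{T}$, a ``target'' point and observe that since $f$ is Lipschitz, $f(T)$ is contained in a set of diameter at most $\lip(f)\diam T$. The hypothesis \eqref{lower_bound_mu} says $T$ is not over-populated by $X$: $\abs{X\cap T}\le n^d\mu(T)$. The hypothesis \eqref{upper_bound_nu} says that the $\tfrac1n$-neighbourhood of any measurable set contains enough points of $Y$ to account for its $\nu$-mass. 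The strategy is to use a Hall-type / greedy matching argument cell-by-cell: for each $T$, we want to injectively send the $\abs{X\cap T}$ points of $X$ lying in $T$ into points of $Y$ that lie near $f(T)$, doing this consistently across all cells so that the global map $g$ remains injective.

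The key steps, in order, would be: (1) Push the counting measure of $X$ forward; more precisely, define an auxiliary measure on $[0,1]^d$ roughly of the form $\mu_X := n^{-d}\sum_{x\in X}\delta_x$ (or a version of $\mu$ compatible with it via \eqref{lower_bound_mu}) and check that $f_\sharp$ of this auxiliary object is dominated, region by region, by the ``neighbourhood count'' measure of $Y$ appearing in \eqref{upper_bound_nu}. The point of \eqref{lower_bound_mu} and \eqref{upper_bound_nu} is precisely to make a discrete transport / Hall's marriage condition hold: for any union $\mathcal{U}$ of cells $T$, the total number of $X$-points in $\mathcal{U}$ is at most $n^d\mu(\bigcup\mathcal U)$, which after pushing forward by $f$ and enlarging by $\lip(f)\cdot\max_T\diam T$ (to absorb the image of each cell) plus the $\tfrac1n$ from \eqref{upper_bound_nu}, is at most the number of $Y$-points in a controlled neighbourhood of $f(\bigcup\mathcal U)$. (2) Invoke Hall's theorem (or a defect version) to get an injection $g\colon X\to Y$ such that each $x\in X\cap T$ is matched to some $g(x)\in Y$ within $\infty$-distance $\le \lip(f)\diam T + C/n$ of $f(x)$, for a dimensional constant $C$. (3) Estimate $\lip(g)$: for $x,x'\in X$, $\abs{g(x)-g(x')}_\infty \le \abs{g(x)-f(x)}_\infty + \abs{f(x)-f(x')}_\infty + \abs{f(x')-g(x')}_\infty \le \lip(f)\abs{x-x'}_\infty + 2\lip(f)\max_T\diam T + 2C/n$. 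Since distinct points of $X\subseteq \tfrac1{cn}\Z^d$ are at distance $\ge \tfrac1{cn}$, we have $1 \le cn\abs{x-x'}_\infty$, so the additive error terms are bounded by $\bigl(2\lip(f)\max_T\diam T + 2C/n\bigr)\cdot cn\abs{x-x'}_\infty$, giving $\lip(g)\le \Lambda\max\{1,\lip(f)\}\,c\,(n\max_T\diam T + 1)$ after collecting constants into $\Lambda=\Lambda(d)$.

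The main obstacle, and the step needing the most care, is establishing the global Hall condition in step (1) with the right neighbourhood radius. One has to verify that for an \emph{arbitrary} finite union $\mathcal U = \bigcup_{T\in\mathcal S}T$ of cells, the set $N := \{\,y : d_\infty(y, f(\mathcal U)) \le (\text{enlargement})\,\}$ satisfies $\abs{X\cap\mathcal U}\le\abs{Y\cap N}$; this requires choosing the enlargement to be at least $\lip(f)\max_T\diam T$ (so that $f(\mathcal U)$ together with its $\lip(f)\max_T\diam T$-neighbourhood covers the actual images $f(T)$ for $T\in\mathcal S$, up to the cube) \emph{plus} the $\tfrac1n$ buffer from \eqref{upper_bound_nu}, and then chaining $\abs{X\cap\mathcal U} \le n^d\mu(\mathcal U) = n^d\nu(f(\mathcal U)) \le \abs{\{y\in Y : d_\infty(y,f(\mathcal U))\le \tfrac1n\}} \le \abs{Y\cap N}$, using $f_\sharp\mu = \nu$ in the middle equality. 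A subtlety is that $f(\mathcal U)$ need not be $\nu$-measurable a priori, so one should replace it by its closure or work with $f$ restricted appropriately and use that $f$ is continuous (being Lipschitz) so that images of the finitely many cells are compact, hence measurable. Also one must handle boundary effects where $f(\mathcal U)$ pokes outside $[0,1]^d$ — but since $f$ maps $[0,1]^d$ into $[0,1]^d$ by assumption, this does not actually arise. Once the Hall condition is in place, steps (2) and (3) are routine.
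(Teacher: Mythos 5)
Your proposal is correct and follows essentially the same route as the paper: verify the Hall condition for the cell-wise correspondence $x\mapsto\{y\in Y: d_\infty(y,f(T(x)))\le\text{radius}\}$ via the chain combining \eqref{lower_bound_mu}, $f_\sharp\mu=\nu$ and \eqref{upper_bound_nu}, then bound $\lip(g)$ by the triangle inequality together with the $\frac{1}{cn}$ minimal spacing of $X$. One small correction: the ``middle equality'' $n^d\mu(\mathcal U)=n^d\nu(f(\mathcal U))$ should be the inequality $\mu(\mathcal U)\le\nu(f(\mathcal U))$ (since $\mathcal U\subseteq f^{-1}(f(\mathcal U))$ and $f$ need not be injective), which is exactly the direction your argument needs, so nothing breaks.
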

\begin{proof}
	For a point $x\in X$ we denote by $T(x)$  a choice of set $T\in \mc{T}$ which contains $x$. We further define a set valued mapping $R\colon X\to 2^{Y}$ by
	\begin{equation*}
		R(x)=\left\{y\in Y \colon d_{\infty}(y,f(T(x)))\leq\frac{1}{n}\right\}.
	\end{equation*}
	In what follows we obtain an injective mapping $g\colon X\to Y$ with the property that
	\begin{equation}\label{eq:f_tilda_cond}
		g(x)\in R(x),\qquad x\in X.
	\end{equation}
	We may then complete the proof in the following way. For distinct points $x,x'\in X$ we observe that
	\begin{equation*}
		\lnorm{2}{g(x')-g(x)}\leq\lnorm{2}{g(x')-f(x')}+\lnorm{2}{f(x')-f(x)}+\lnorm{2}{f(x)-g(x)}
	\end{equation*}
	Now, from condition~\eqref{eq:f_tilda_cond} we have 
	\begin{align*}
		\lnorm{\infty}{g-f|_{X}}&\leq
		\max_{T\in \mc{T}}\diam f(T)+\frac{\sqrt{d}}{n}\\
		&\leq\lip(f)\sqrt{d}\max_{T\in\mc{T}}\diam T+\frac{\sqrt{d}}{n}\\
		&\leq \sqrt{d}\max\set{1,\lip(f)}\left(\max_{T\in\mc{T}}\diam T+\frac{1}{n}\right).
	\end{align*}
	Hence, using $\lnorm{2}{x'-x}\geq\frac{1}{c n}$, we obtain
	\begin{align*}
		\lnorm{2}{g(x')-g(x)}&\leq 2\sqrt{d}\max\set{1,\lip(f)}\left(\max_{T\in\mc{T}}\diam T+\frac{1}{n}\right)+\lip(f)\lnorm{2}{x'-x}\\
		&\leq 3\sqrt{d}\max\set{1,\lip(f)}c\left(n\cdot\max_{T\in\mc{T}}\diam T+1\right)\lnorm{2}{x'-x}.
	\end{align*}
	
	It only remains to verify the existence of the mapping $g$. To do this we will adopt a similar strategy to that employed in \cite[Theorem~4.1]{McM}. By Hall's Marriage Theorem it suffices to verify that $\left|A\right|\leq\left|R(A)\right|$ for any set $A\subseteq X$.
	
	Let $A\subseteq X$, $T_{1},\ldots,T_{p}$ be an enumeration of $\set{T(x)\colon x\in A}$ and $E:=\bigcup_{j\in[p]}f(T_{j})$. Then
	\begin{equation*}
		\set{y\in Y\colon d_{\infty}(y,\cl{E})\leq \frac{1}{n}}=\set{y\in Y\colon d_{\infty}(y,E)\leq \frac{1}{n}}=R(A).
	\end{equation*}
	Moreover, $\cl{E}=\bigcup_{j\in[p]}\cl{f(T_{j})}$ is $\nu$-measurable because it is closed and $\nu$ is Borel. Therefore, by \eqref{upper_bound_nu},
	\begin{equation}\label{eq:ineq1}
		\nu\left(\bigcup_{j\in [p]}\cl{f(T_{j})}\right)\leq \frac{1}{n^{d}}\cdot \left|R(A)\right|.
	\end{equation}
	On the other hand, using $f_{\sharp}\mu=\nu$ and \eqref{lower_bound_mu} we may derive
	\begin{multline}
		\nu\left(\bigcup_{j\in[p]}\cl{f(T_{j})}\right)\geq\sum_{j\in [p]}\mu(T_{j})\geq\frac{1}{n^{d}}\sum_{j\in[p]}\abs{X\cap T_{j}}\geq \frac{1}{n^{d}}\cdot\left|A\right|. \label{eq:ineq2}
	\end{multline}
	Combining \eqref{eq:ineq1} and \eqref{eq:ineq2} we get
	\begin{equation*}
		\left|A\right|\leq\left|R(A)\right|,
	\end{equation*}
	as required.
\end{proof}

\section{Well-distributed sets.}\label{sec:well_dist}
In this section we derive an upper bound on the best Lipschitz constant $F_{n}(S)$ for sets $S\in \binom{[cn]^{d}}{n^{d}}$ which are `well-distributed' in the sense that the points are quite evenly spread, relative to the grid partition coming from $\mc{T}_{m}$. 
\begin{lemma}\label{lemma:well_dist_set_bound}
	Let $m,n,l\in \N$ with $m=2^{l}\leq n$, $0<\theta<a<b<\frac{1}{\theta}$, $c>1$ and $S\subseteq \Z^{d}\cap [0,c n]^{d}$ be a finite set with $\abs{S}=n^{d}$ and 
	\begin{equation*}
		\frac{an^{d}}{m^{d}}\leq \abs{S\cap (c n\cdot T)}\leq \frac{bn^{d}}{m^{d}}
	\end{equation*}
	for all $T\in \mc{T}_{m}$, where $\mc{T}_{m}$ is defined by \eqref{def:Tk}. Then there exists a bijection $g\colon S\to [n]^{d}$ and constants $\Lambda:=\Lambda(d)$, $\Delta:=\Delta(d,\theta)$ with 
	\begin{equation*}
		\lip(g)\leq \Lambda \exp\left(\log n-l(1-\Delta(b-a))\right).
	\end{equation*}
\end{lemma}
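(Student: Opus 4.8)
The plan is to pass from the discrete problem to a continuous transport problem via Lemma~\ref{lemma:cts_to_discrete}, and then to build the required Lipschitz transport map by a coarse-to-fine multiscale construction on the dyadic grids $\mathcal{T}_{2^{j}}$.

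For the reduction, I would apply Lemma~\ref{lemma:cts_to_discrete} with $\mathcal{T}=\mathcal{T}_m$, $X:=\tfrac1{cn}S$, $Y:=\tfrac1n[n]^d$, $\nu:=\leb|_{[0,1]^d}$ and $\mu:=\rho\,\leb$, where
\begin{equation*}
\rho:=\frac{m^d}{n^d}\sum_{T\in\mathcal{T}_m}\abs{S\cap(cn\cdot T)}\,\mathbf{1}_{T}.
\end{equation*}
Then $\mu$ is a Borel probability measure on $[0,1]^d$, the density $\rho$ is constant on each cube of $\mathcal{T}_m$ with values in $[a,b]$ and mean $1$, hypothesis \eqref{lower_bound_mu} holds with equality since $\mu(T)=\tfrac1{n^d}\abs{S\cap(cn\cdot T)}=\tfrac1{n^d}\abs{X\cap T}$, and \eqref{upper_bound_nu} holds because any measurable $E$ is covered up to a null set by the half-open cubes $y+\left(-\tfrac1n,0\right]^{d}$ with $y\in Y$ and $d_{\infty}(y,E)\le\tfrac1n$. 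Hence, for any Lipschitz $f\colon[0,1]^d\to[0,1]^d$ with $f_\sharp\mu=\nu$, Lemma~\ref{lemma:cts_to_discrete} yields an injection $g'\colon X\to Y$, which is a bijection as $\abs X=n^d=\abs Y$; rescaling, $g(x):=n\,g'\!\left(\tfrac{x}{cn}\right)$ is a bijection $S\to[n]^d$ with $\lip(g)=\lip(g')/c\le\Lambda(d)\max\{1,\lip f\}\,n/m$, after substituting $\max_{T\in\mathcal{T}_m}\diam T=\sqrt d/m$, using $m\le n$, and absorbing dimensional constants into $\Lambda(d)$. It therefore suffices to produce $f\colon[0,1]^d\to[0,1]^d$ with $f_\sharp(\rho\,\leb)=\leb$ and $\lip(f)\le 2^{l\Delta(b-a)}$ for a constant $\Delta=\Delta(d,\theta)$.

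I would obtain such an $f$ as a composition $f=f_1\circ f_2\circ\cdots\circ f_l$ of dyadic equalising maps. Put $\rho_j:=\mathbb{E}[\rho\mid\mathcal{T}_{2^{j}}]$, the average of $\rho$ over each cube of $\mathcal{T}_{2^{j}}$, so $\rho_l=\rho$, $\rho_0\equiv 1$, and each $\rho_j$ is constant on $\mathcal{T}_{2^{j}}$ with values in $[a,b]$. The map $f_j$ is to transport $\rho_j\,\leb$ onto $\rho_{j-1}\,\leb$ by acting independently inside each cube $Q\in\mathcal{T}_{2^{j-1}}$: on such a $Q$, the density $\rho_j$ equals a constant $v_i\in[a,b]$ on each of the $2^d$ dyadic children of $Q$, their average is $\bar v:=\rho_{j-1}|_Q\ge a>\theta$, and $f_j|_Q$ should be a Lipschitz homeomorphism of $Q$ onto itself, equal to the identity on $\partial Q$, with $(f_j|_Q)_\sharp(\rho_j|_Q\,\leb|_Q)=\bar v\,\leb|_Q$. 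Since the cubes of $\mathcal{T}_{2^{j-1}}$ tile $[0,1]^d$ and the pieces agree (as the identity) along shared faces, they glue to a Lipschitz map $f_j$ of $[0,1]^d$ onto itself with $(f_j)_\sharp(\rho_j\,\leb)=\rho_{j-1}\,\leb$; telescoping over $j$ then gives $f_\sharp(\rho\,\leb)=\leb$.

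The crux, and the step I expect to be the main obstacle, is the quantitative local estimate: on the unit cube, with a density taking values $v_i\in[a,b]$ on the $2^d$ dyadic children and mean $\bar v$, the equalising self-map fixing the boundary can be chosen with
\begin{equation*}
\lip\le 1+C(d)\,\frac{\max_i\abs{v_i-\bar v}}{\bar v}\le 1+C(d)\,\frac{b-a}{\theta},
\end{equation*}
using $\bar v\ge a>\theta$ and $\max_i\abs{v_i-\bar v}\le b-a$. This is a prescribed-Jacobian problem of Dacorogna--Moser / Burago--Kleiner type: solve $\det D\psi=\rho_j|_Q/\bar v$ on $Q$ with $\psi=\id$ on $\partial Q$ and Lipschitz bound proportional to the relative density defect. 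I would carry it out by hand, composing a number of elementary near-identity corrections depending only on $d$, each localised to the union of two adjacent children, redistributing their density defect by a small displacement of the separating face that tapers to the identity on the boundary of that union; the delicate point is the bookkeeping that the composition still equals the identity on $\partial Q$, pushes the measure forward exactly, and retains the stated constant (one may reduce the merely piecewise-constant case to a smooth one by mollifying at a fixed scale and passing to a limit). Granting this, $\lip(f_j)\le\exp\!\left(C'(d,\theta)(b-a)\right)$, hence $\lip(f)\le\prod_{j=1}^{l}\lip(f_j)\le\exp\!\left(C'(d,\theta)\,l\,(b-a)\right)=2^{l\Delta(b-a)}$ with $\Delta(d,\theta):=C'(d,\theta)/\ln 2$. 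Combined with the reduction, $\lip(g)\le\Lambda(d)\,2^{l\Delta(b-a)}\,n/m=\Lambda(d)\,2^{\log n-l(1-\Delta(b-a))}$, as claimed.
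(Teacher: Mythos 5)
Your proposal follows essentially the same route as the paper: the same density $\rho$, the same application of Lemma~\ref{lemma:cts_to_discrete} with $X=\tfrac{1}{cn}S$, $Y=\tfrac1n[n]^d$, $\mu=\rho\leb$, $\nu=\leb|_{[0,1]^{d}}$ (your explicit verification of \eqref{lower_bound_mu} and \eqref{upper_bound_nu}, and of why the factor $c$ disappears after rescaling, is correct and in fact more detailed than the paper's), and the same coarse-to-fine composition of per-scale equalising homeomorphisms with Lipschitz factor $1+O_{d,\theta}(b-a)$, yielding $(1+\Delta(b-a))^{l}\leq 2^{l\Delta(b-a)}$ and hence the stated bound. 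The one step you leave unproven --- the local prescribed-Jacobian self-map of a cube, equal to the identity on the boundary, with $\lip\leq 1+C(d)\,(b-a)/\theta$ --- is exactly the ingredient the paper does not reprove either: it is Lemma~\ref{lemma:RY}, quoted from Rivi\`ere--Ye \cite{RY}. The paper's Lemma~\ref{lemma:smoothing_tiles} organises your $2^{d}$-children equalisation as $d$ successive bisections, one coordinate direction at a time, each an application of that two-box lemma with Jacobians $2\alpha,2\beta$ and $\lip(\Phi-\id)\leq\Delta(\eta)\abs{1-2\alpha}$; the bounds $\theta\leq\rho\leq\tfrac1\theta$ give $\abs{1-2\alpha}\leq\tfrac{1}{2\theta}(b-a)$ and keep $\alpha\in[\eta,1-\eta]$ with $\eta=\theta^{2}/(1+\theta^{2})$ through all intermediate sub-steps --- precisely the bookkeeping you flag as the delicate point, and the reduction to two boxes is what makes the ``by hand'' construction you gesture at tractable. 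So, modulo quoting (or actually carrying out) that bisection lemma, your argument coincides with the paper's; as written, that granted local lemma is the only missing piece.
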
 
Let us begin working towards a proof of Lemma~\ref{lemma:well_dist_set_bound}. The bound will be established by applying Lemma~\ref{lemma:cts_to_discrete} in the case that $\nu$ is the Lebesgue measure on $[0,1]^{d}$ and $\mu$ has the form $\mu=\rho\leb$, where $\rho$ is of the form considered in the next lemma.
\begin{lemma}\label{lemma:smoothing_tiles}
	Let $l\in\N$, $\theta\in (0,1)$, $\mc{T}_{k}$ be defined by \eqref{def:Tk} for each $k\in\N$, and $\rho\colon [0,1]^{d}\to(0,\infty)$ be a function such that $\rho|_{T}$ is constant for each $T\in \mc{T}_{2^{l}}$, $\int_{[0,1]^{d}}\rho\,d\leb=1$ and $\theta\leq \min\rho\leq \max\rho\leq \frac{1}{\theta}$. Then there exists a Lipschitz homeomorphism $f\colon [0,1]^{d}\to\R^{d}$ and a constant $\Delta=\Delta(d,\theta)>0$ such that
	\begin{equation*}
		f_{\sharp}\rho\leb=\leb|_{[0,1]^{d}},
	\end{equation*}
	and
	\begin{equation*}
		\lip(f)\leq (1+\Delta(\max \rho-\min \rho))^{l}.
	\end{equation*}
\end{lemma}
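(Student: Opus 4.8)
The plan is to build $f$ by composing $l$ layers of maps, each layer halving the refinement scale: at stage $j$ we pass from the constant-on-$\mc{T}_{2^{j}}$ structure to the constant-on-$\mc{T}_{2^{j-1}}$ structure, accumulating one factor of $(1+\Delta(\max\rho-\min\rho))$ in the Lipschitz constant per layer. Concretely, start with $\rho_{l}=\rho$. Given a density $\rho_{j}$ that is constant on each $T\in\mc{T}_{2^{j}}$, with $\theta\le\min\rho_{j}\le\max\rho_{j}\le 1/\theta$ and $\int\rho_{j}=1$, I would produce a Lipschitz homeomorphism $h_{j}\colon[0,1]^{d}\to[0,1]^{d}$ that pushes $\rho_{j}\leb$ forward to $\rho_{j-1}\leb$, where $\rho_{j-1}$ is the conditional-expectation (average) of $\rho_{j}$ over each $T\in\mc{T}_{2^{j-1}}$ — so $\rho_{j-1}$ is constant on the coarser grid, still has mean $1$, and its range is still contained in $[\theta,1/\theta]$ (averaging cannot increase $\max$ or decrease $\min$). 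After $l$ stages we reach $\rho_{0}\equiv 1$, i.e. $\leb|_{[0,1]^{d}}$, and $f:=h_{1}\circ h_{2}\circ\cdots\circ h_{l}$ satisfies $f_{\sharp}\rho\leb=\leb|_{[0,1]^{d}}$ with $\lip(f)\le\prod_{j=1}^{l}\lip(h_{j})$.

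The heart of the matter is the single-scale step: constructing $h_{j}$ with $\lip(h_{j})\le 1+\Delta(\max\rho-\min\rho)$, where crucially $\Delta$ depends only on $d$ and $\theta$, and the oscillation bound uses the \emph{global} oscillation $\max\rho-\min\rho$ of the original $\rho$ rather than that of $\rho_{j}$ (which is legitimate since averaging only shrinks oscillation, so $\max\rho_{j}-\min\rho_{j}\le\max\rho-\min\rho$). I would do this coordinate-by-coordinate, or rather grid-cell by grid-cell: on a fixed cube $Q\in\mc{T}_{2^{j-1}}$, which is subdivided into $2^{d}$ children $Q_{1},\dots,Q_{2^{d}}\in\mc{T}_{2^{j}}$ on which $\rho_{j}$ takes values $\alpha_{1},\dots,\alpha_{2^{d}}$, I need a map fixing $\partial Q$ (so the pieces glue to a global homeomorphism) that redistributes mass, sending $\rho_{j}\leb$ on $Q$ to the constant $\bar\alpha:=2^{-d}\sum\alpha_{k}$ times $\leb$ on $Q$. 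Such a map can be written as a composition of $d$ one-dimensional shears: in the $i$-th coordinate direction one slides the dyadic midpoint hyperplane of $Q$ (and interpolates linearly to $\partial Q$) by an amount proportional to the relevant mass imbalance; the displacement is $O_{d}(\text{oscillation}/\theta)$ times the side length, and dividing by the side length gives the Lipschitz estimate $1+\Delta(\max\rho-\min\rho)$ with $\Delta=\Delta(d,\theta)$. The factor $1/\theta$ enters because to turn a relative mass discrepancy into an absolute displacement one divides by the local density, which is at least $\theta$.

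The main obstacle I expect is carrying out this single-scale redistribution while (a) keeping the map the identity on $\partial Q$ so that the cell-wise maps patch together into a well-defined Lipschitz self-map of $[0,1]^{d}$, (b) getting the clean bound $1+\Delta(\max\rho-\min\rho)$ — in particular ensuring the constant in front of the oscillation is uniform in $j$ and in the particular configuration of the $\alpha_{k}$'s — and (c) verifying that the composition of the $d$ one-dimensional shears still pushes forward exactly to the averaged density (the shears in different directions do not commute, so one must order them and track how each affects the marginals, or alternatively use a telescoping argument moving mass one dyadic hyperplane at a time). Everything else — the telescoping over the $l$ scales, the preservation of the bounds $\theta\le\rho_{j}\le1/\theta$ and $\int\rho_{j}=1$ under conditional expectation, and the final product estimate — is routine once the single-scale lemma is in hand.
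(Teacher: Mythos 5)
Your overall architecture coincides with the paper's: compose over the $l$ dyadic scales cube-by-cube redistribution maps, handle each scale by composing $d$ coordinate-direction maps that are the identity on the boundaries of the relevant boxes (so they glue to a global homeomorphism), collect one factor $1+\Delta(\max\rho-\min\rho)$ per scale, and absorb the extra power $d$ into $\Delta(d,\theta)$ at the end; your telescoping through the conditional averages $\rho_{j}$ is exactly the bookkeeping the paper performs when it verifies by induction that $\Jac(f_{q})$ equals the average of $\rho$ on each tile of $\mc{T}_{2^{q}}$. So the route is the same; the issue is with the one ingredient you defer.

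That ingredient — the single-cell, single-direction redistribution — is where your sketch has a genuine gap, and it is precisely the step you label as the main obstacle. The map you describe (slide the dyadic midpoint hyperplane of $Q$ by an amount proportional to the mass imbalance and interpolate linearly towards $\partial Q$) does not push the density forward exactly onto its average: if the displacement of the mid-hyperplane varies with the transverse variable, as it must in order to vanish on the lateral boundary of $Q$, then the Jacobian of the shear is no longer constant on each half-box, so the image density is not constant and the exact pushforward identity fails; if instead the displacement is constant, the map is not the identity on $\partial Q$ and the cell-wise pieces no longer patch together. Your obstacles (a) and (c) are thus in direct tension, and nothing in the proposal resolves it. What is needed is a Lipschitz homeomorphism of a box which is the identity on the \emph{entire} boundary, has Jacobian exactly $2\alpha$ on one half and $2\beta$ on the other, and satisfies $\lip(\Phi-\id)\leq\Delta\abs{1-2\alpha}$ with $\Delta$ depending only on a lower bound $\eta$ for $\min\set{\alpha,\beta}$; this is a non-trivial construction and is exactly Lemma~\ref{lemma:RY} (Rivi\`ere--Ye), which the paper invokes after affine rescaling, with $\eta=\frac{\theta^{2}}{1+\theta^{2}}$ and the elementary estimate $\abs{1-2\alpha}\leq\frac{1}{2\theta}(\max\rho-\min\rho)$. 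With that lemma (or an equivalent construction, e.g.\ a flow or a two-step map correcting the Jacobian error of the naive shear) your argument closes and becomes essentially the paper's proof; without it, the heart of the lemma remains unproved.
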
	
The proof of Lemma~\ref{lemma:smoothing_tiles} is due to Rivier\'e and Ye~\cite{RY}. However, there the argument is used to prove a more general statement and Lemma~\ref{lemma:smoothing_tiles} is not stated or proved explicitly. The proof is based on the following lemma. 
\begin{lemma}[{\cite[Lemma~1]{RY}}]\label{lemma:RY}
	Let $D=[0,1]^{d}$, $A=[0,1]^{d-1}\times\left[0,\frac{1}{2}\right]$ and $B=[0,1]^{d-1}\times\left[\frac{1}{2},1\right]$. Let $\alpha,\beta\geq 0$ be such that $\alpha+\beta=1$ and let $\eta>0$ be such that $\eta\leq \alpha\leq 1-\eta$. Then there exists a Lipschitz homeomorphism $\Phi\colon [0,1]^{d}\to [0,1]^{d}$ and a constant $\Delta=\Delta(d,\eta)$ such that
	\begin{enumerate}[(i)]
		\item\label{RY1} $\Phi|_{\partial [0,1]^{d}}=\id_{\partial[0,1]^{d}}$,
		\item\label{RY2} $\Jac(\Phi)(x)=\begin{cases}
			2\alpha & \text{if }x\in A,\\
			2\beta & \text{if }x\in B,
		\end{cases}$ for a.e. $x\in [0,1]^{d}$,
		\item\label{RY3} $\lip(\Phi-\id)\leq \Delta \abs{1-2\alpha}$.
	\end{enumerate}
\end{lemma}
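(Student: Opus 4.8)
\emph{Proof sketch.}
We may assume $\alpha\le\tfrac12$; the case $\alpha>\tfrac12$ reduces to it by solving the problem for the pair $(\beta,\alpha)$ and conjugating the resulting map with the reflection $x\mapsto(x_1,\dots,x_{d-1},1-x_d)$, which is an isometry carrying $\partial[0,1]^d$ onto itself, interchanges $A$ and $B$, and leaves (i)--(iii) intact since $|1-2\beta|=|1-2\alpha|$. Set $f:=2\alpha\,\mathbf{1}_A+2\beta\,\mathbf{1}_B$, so $f$ takes the two values $2\alpha,2\beta\in[2\eta,2-2\eta]$, $\int_{[0,1]^d}f\,\dint\leb=\alpha+\beta=1$, and $\|f-1\|_\infty=|1-2\alpha|$. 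The plan is to obtain $\Phi$ by the flow method of Dacorogna and Moser, in the form valid for bounded densities due to Rivi\'ere and Ye: an auxiliary divergence equation produces a velocity field whose time-one flow transports $\leb$ to $f\leb$, and the size of that field, hence of $\Phi-\id$, is governed linearly by $\|f-1\|_\infty$.

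In detail, put $f_t:=(1-t)+tf$ for $t\in[0,1]$, so $f_0\equiv1$, $f_1=f$, $\partial_tf_t=f-1$, and $2\eta\le f_t\le 2-2\eta$. Suppose that for each $t$ one has a Lipschitz field $v_t\colon[0,1]^d\to\R^d$ vanishing on $\partial[0,1]^d$ with $\operatorname{div}(f_tv_t)=1-f$ and $\lip(v_t)\le\Delta_0(\eta)\,|1-2\alpha|$. Let $X_t$ be the flow of $\dot X_t=v_t\circ X_t$, $X_0=\id$. Because $v_t$ vanishes on $\partial[0,1]^d$, each $X_t$ fixes $\partial[0,1]^d$ pointwise; because $\partial_tf_t+\operatorname{div}(f_tv_t)=0$, the continuity equation yields $(X_t)_\sharp\leb=f_t\leb$, i.e.\ $f_t(X_t)\,\Jac X_t\equiv1$; and from $X_1-\id=\int_0^1v_t\circ X_t\,\dint t$ together with the Gr\"onwall inequality $\lip(X_t)\le e^{\int_0^t\lip(v_s)\,\dint s}$ one gets $\lip(X_1-\id)\le C(\eta)\,|1-2\alpha|$ (for $|1-2\alpha|$ bounded away from $0$ this Gr\"onwall bound is merely a finite constant depending on $\eta$, which is trivially $\le C'(\eta)|1-2\alpha|$). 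Setting $\Phi:=X_1^{-1}$, property (i) holds because $X_1$ fixes the boundary, property (ii) because $\Jac\Phi(y)=1/\Jac X_1(X_1^{-1}y)=f(y)$ for a.e.\ $y$, and property (iii) because $\lip(\Phi-\id)=\lip(X_1^{-1}-\id)\le 2\lip(X_1-\id)$ once the latter is, say, $\le\tfrac12$ (which may be arranged, enlarging $\Delta$ at the end to absorb the remaining, finitely constrained, range of $\alpha$).

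The whole matter thus comes down to a divergence equation with a Lipschitz bound linear in the sup norm of the right-hand side: find a Lipschitz $w\colon[0,1]^d\to\R^d$ vanishing on $\partial[0,1]^d$ with $\operatorname{div}w=1-f$ and $\lip(w)\le C_d|1-2\alpha|$, after which one takes $v_t:=w/f_t$. This is the step I expect to be the main obstacle: for an arbitrary bounded right-hand side no such Lipschitz solution exists --- the Bogovskii right inverse of $\operatorname{div}$ gains only $W^{1,p}$ for finite $p$, or $C^{0,\gamma}$ for $\gamma<1$ --- so one must use the rigid structure of $1-f=(1-2\alpha)(\mathbf{1}_B-\mathbf{1}_A)$, piecewise constant across the single flat interface $\{x_d=\tfrac12\}$. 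The natural attack is an explicit construction on $[0,1]^{d-1}\times[0,1]$: start from the ``profile field'' $(0,\dots,0,\phi(x_d))$ with $\phi'=\mathbf{1}_B-\mathbf{1}_A$ and $\phi(0)=\phi(1)=0$ (this already has divergence $1-f$ and vanishes on the top and bottom faces), then subtract, near each lateral face, a divergence-free field carrying the trace of the profile field on that face, and finally patch these corrections near the edges and corners of the cube; every ingredient being linear in $(1-2\alpha)$ then yields the asserted bound. Arranging the lateral corrections to be simultaneously divergence-free, boundary-matching, and Lipschitz with a uniformly controlled constant --- and checking that $v_t=w/f_t$ can be taken Lipschitz, e.g.\ by first mollifying the interface and passing to the limit --- is precisely the delicate point of Rivi\'ere and Ye's argument; granted it, the flow computation above is routine.
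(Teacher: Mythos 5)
A point of reference first: the paper does not prove this statement at all --- it is imported verbatim as Lemma~1 of Rivi\'ere--Ye \cite{RY} --- so there is no in-paper proof to compare yours against; what follows assesses your sketch on its own terms. Your outer scheme (reflection to reduce to $\alpha\le\tfrac12$, the Moser-type interpolation $f_{t}=(1-t)+tf$, the flow/continuity-equation computation, and the inversion $\Phi=X_{1}^{-1}$, including the Gr\"onwall bookkeeping for (iii)) is sound in outline. But the lemma has not been proved: all of its content has been pushed into the step you yourself flag as the obstacle, namely producing a Lipschitz field $w$ with $\operatorname{div}w=1-f$, $w=0$ on $\partial[0,1]^{d}$ and $\lip(w)\le C(d)\abs{1-2\alpha}$, and that step is genuinely delicate rather than routine patching. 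Concretely, your proposed recipe (profile field $(0,\ldots,0,\phi(x_{d}))$ plus divergence-free lateral corrections matching its trace) does not obviously close: the trace to be matched is the tent profile $\phi$, which is Lipschitz but not $C^{1}$, and in two dimensions a divergence-free Lipschitz field is a rotated gradient of a $C^{1,1}$ stream function, so the natural extension $\psi(x,y)=-x\chi(x)\phi(y)$ yields a tangential component $\partial_{y}\psi=-x\chi(x)\phi'(y)$ which jumps across $\{y=\tfrac12\}$ and is not even continuous; to repair this one needs something like mollifying the profile at scale comparable to the distance from the face, and then a separate treatment of edges and corners when $d\ge3$. This is exactly the part for which the present paper defers to \cite{RY}.

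A second unresolved point sits inside the flow step itself: even granted $w$, the field $v_{t}=w/f_{t}$ is discontinuous across the interface $\{x_{d}=\tfrac12\}$ wherever $w\neq0$ there, and the profile field is largest precisely on that interface. So either $w$ must additionally be constructed so that $w/f_{t}$ is Lipschitz (e.g.\ by building the interface behaviour into $w$), or your parenthetical ``mollify and pass to the limit'' must be turned into an actual argument: uniform two-sided Lipschitz bounds on the approximating flows, convergence to a homeomorphism, and weak continuity of Jacobians to recover the exact piecewise-constant Jacobian of property (ii) in the limit. As it stands, you have reduced Lemma~\ref{lemma:RY} to a statement of essentially the same difficulty and left that statement unproved.
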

Since we only require the argument of Rivier\'e and Ye~\cite{RY} for a particular special case, the following restricted version of the argument is more convenient for the reader.
\begin{proof}[Proof of Lemma~\ref{lemma:smoothing_tiles}]
	For each $i\in\N\cup\set{0}$ and $k=(k_{1},\ldots,k_{d})\in (\set{0}\cup[2^{i}-1])^{d}$ we let 
	\begin{equation*}
		C(k,i):=\prod_{1\leq j\leq d}\left[\frac{k_{j}}{2^{i}},\frac{k_{j}+1}{2^{i}}\right].
	\end{equation*}
	Observe that the sets $C(k,i)$ for $k\in(\set{0}\cup[2^{i}-1])^{d}$ are, up until sets of Lebesgue measure zero, the same as the sets in $\mc{T}_{2^i}$. In particular we have that the restriction of $\rho$ to each $C(k,l)$ is a.e. constant.
	
	For each $i\in\N\cup\set{0}$, we define a homeomorphism $\Phi_{i}\colon [0,1]^{d}\to [0,1]^{d}$ by prescribing it on each cube $C(k,i)$, $k=(k_{1},\ldots,k_{d})\in (\set{0}\cup[2^{i}-1])^{d}$. 
	
	Fix $i\in\N\cup\set{0}$ and $k=(k_{1},\ldots,k_{d})\in (\set{0}\cup[2^{i}-1])^{d}$. For each $p\in[d]$ and $\varepsilon=(\varepsilon_{1},\ldots,\varepsilon_{d})\in\set{0,1}^{d}$ let
	\begin{align*}
		A_{i}^{p}(\varepsilon)&=\prod_{j=1}^{p-1}\left[\frac{k_{j}}{2^{i}},\frac{k_{j}+1}{2^{i}}\right]\times \left[\frac{k_{p}}{2^{i}},\frac{2k_{p}+1}{2^{i+1}}\right]\times\prod_{j=p+1}^{d}\left[\frac{k_{j}}{2^{i}}+\frac{\varepsilon_{j}}{2^{i+1}},\frac{k_{j}}{2^{i}}+\frac{\varepsilon_{j}+1}{2^{i+1}}\right],\\
		B_{i}^{p}(\varepsilon)&=\prod_{j=1}^{p-1}\left[\frac{k_{j}}{2^{i}},\frac{k_{j}+1}{2^{i}}\right]\times \left[\frac{2k_{p}+1}{2^{i+1}},\frac{k_{p}+1}{2^{i}}\right]\times\prod_{j=p+1}^{d}\left[\frac{k_{j}}{2^{i}}+\frac{\varepsilon_{j}}{2^{i+1}},\frac{k_{j}}{2^{i}}+\frac{\varepsilon_{j}+1}{2^{i+1}}\right],\\
		&\qquad\alpha_{i}^{p}(\varepsilon)=\frac{\int_{A_{i}^{p}(\varepsilon)}\rho \,d\leb}{\int_{A_{i}^{p}(\varepsilon)\cup B_{i}^{p}(\varepsilon)}\rho\,d\leb},\qquad\qquad \beta_{i}^{p}(\varepsilon)=\frac{\int_{B_{i}^{p}(\varepsilon)}\rho \,d\leb}{\int_{A_{i}^{p}(\varepsilon)\cup B_{i}^{p}(\varepsilon)}\rho\,d\leb}. 
	\end{align*}
	Note that, for each fixed $p$, the sets $A_{i}^{p}(\varepsilon)$, $B_{i}^{p}(\varepsilon)$ indexed by $\varepsilon=(\varepsilon_{1},\ldots,\varepsilon_{d})\in \set{0,1}^{d}$ determine a partition of $C(k,i)$. More precisely, after ignoring repetitions, these sets have pairwise disjoint interiors and their union is $C(k,i)$.
	
	For each $j\in[d]$ we define a homeomorphism $\Phi_{i}^{j}\colon C(k,i)\to C(k,i)$ as follows: For each $\varepsilon=(\varepsilon_{1},\ldots,\varepsilon_{d})\in \set{0,1}^{d}$ define $\Phi_{i}^{j}|_{A_{i}^{j}(\varepsilon)\cup B_{i}^{j}(\varepsilon)}$ as the homeomorphism given by the conclusion of Lemma~\ref{lemma:RY} applied with $D=A_{i}^{j}(\varepsilon)\cup B_{i}^{j}(\varepsilon)$, $A=A_{i}^{j}(\varepsilon)$, $B=B_{i}^{j}(\varepsilon)$, $\alpha=\alpha_{i}^{j}(\varepsilon)$ and $\beta=\beta_{i}^{j}(\varepsilon)$. Note that here we have to use Lemma~\ref{lemma:RY} in combination with suitable affine transformations. Further, the parameter $\eta$ in Lemma~\ref{lemma:RY} may be taken as $\eta=\frac{\theta^{2}}{1+\theta^{2}}$ and we have
	\begin{equation*}
		\abs{1-2\alpha_{i}^{p}(\varepsilon)}\leq \frac{1}{2\theta}(\max\rho-\min\rho),\qquad p\in[d],\,\varepsilon\in \set{0,1}^{d}.
	\end{equation*}
	Hence, this application of Lemma~\ref{lemma:RY} provides a Lipschitz homeomorphism $$\Phi_{i}^{j}\colon A_{i}^{j}(\varepsilon)\cup B_{i}^{j}(\varepsilon)\to A_{i}^{j}(\varepsilon)\cup B_{i}^{j}(\varepsilon)$$ with properties \eqref{RY1}--\eqref{RY3}, where, for a constant $\Delta=\Delta(d,\theta)>0$, \eqref{RY3} translates to
	\begin{equation*}
		\lip(\Phi_{i}^{j}-\id)\leq \Delta (\max\rho-\min\rho), \qquad j\in [d],
	\end{equation*}
	implying
	\begin{equation}\label{eq:upper_bound_Lip}
	\lip(\Phi_{i}^{j})\leq 1+\Delta (\max\rho-\min\rho), \qquad j\in [d].
	\end{equation}
	Due to property \eqref{RY1}, we can glue all of these homeomorphisms together to obtain a homeomorphism $\Phi_{i}^{j}\colon C(k,i)\to C(k,i)$ preserving \eqref{eq:upper_bound_Lip}. Property \eqref{RY1} then allows us to again glue all of these homeomorphisms constructed on each $C(k,i)$ together to obtain a homeomorphism $\Phi_{i}^{j}\colon [0,1]^{d}\to [0,1]^{d}$ preserving \eqref{eq:upper_bound_Lip}.	
	
	Finally set 
	\begin{align*}
		\Phi_{i}&:=\Phi_{i}^{d}\circ \Phi_{i}^{d-1}\circ \ldots \circ \Phi_{i}^{1}\text{ for $i\in[l-1]\cup\set{0}$,}\\
		f_{q}&:=\Phi_{0}\circ\Phi_{1}\circ \Phi_{2}\circ \ldots \Phi_{q}\text{ for $q\in[l-1]$,} \qquad \text{and }
		f:=f_{l-1}.
	\end{align*}
It can be checked that whenever $C(k',i+1)\subseteq C(k,i)$, we have 
\begin{equation*}
	\Jac(\Phi_{i})|_{C(k',i+1)}\equiv \frac{2^{d}\int_{C(k',i+1)}\rho\,d\leb}{\int_{C(k,i)}\rho\,d\leb}.
\end{equation*}
 Moreover, for each $i$ and $C(k,i)$ we have that $\Phi_{i}|_{C(k,i)}$ is a homeomorphism $C(k,i)\to C(k,i)$. Thus, we may use the chain rule for Jacobians to compute 
\begin{equation*}
	\Jac(f)(x)=\frac{2^{ld}\int_{C(k,l)}\rho\,d\leb}{\int_{[0,1]^{d}}\rho\,d\leb}=\frac{1}{C(k,l)}\int_{C(k,l)}\rho\,d\leb=\rho(x)\quad \text{for all $k$ and a.e. $x\in C(k,l)$},
\end{equation*}
where for the last equality we use that $\rho|_{C(k,l)}$ is a.e. constant for each $C(k,l)$.
Hence $\Jac(f)(x)=\rho(x)$ for a.e. $x\in [0,1]^{d}$ and accordingly $f_{\sharp}\rho\leb=\leb|_{[0,1]^{d}}$. Moreover, we have 
	\begin{multline*}
		\lip(f)\leq \prod_{q=0}^{l-1}\lip(\Phi_{q})\leq \prod_{q=0}^{l-1}\prod_{j=1}^{d}\lip(\Phi_{q}^{j})\\
		\leq (1+\Delta(\max\rho-\min\rho))^{ld}\leq (1+\Delta(\max\rho-\min\rho))^{l},
	\end{multline*}
	where we allow the constant $\Delta=\Delta(d,\theta)$ to increase in the last occurence.
\end{proof}

\begin{proof}[Proof of Lemma~\ref{lemma:well_dist_set_bound}]
	Define $\rho\colon [0,1]^{d}\to (0,\infty)$ by
	\begin{equation*}
		\rho|_{T}\equiv\frac{m^{d}}{n^{d}}\cdot \abs{S\cap (c n\cdot T)}, \quad T\in\mc{T}_{m},\qquad \rho|_{[0,1]^{d}\setminus\bigcup\mc{T}_{m}}\equiv 0.
	\end{equation*}
	Thus, $\rho$ is constant on each $T\in \mc{T}_{m}$ and $a\leq \rho\leq b$. By Lemma~\ref{lemma:smoothing_tiles} there exists a Lipschitz homeomorphism $f\colon [0,1]^{d}\to [0,1]^{d}$ and a constant $\Delta=\Delta(d,\theta)$ such that $f_{\sharp}\rho\leb=\leb|_{[0,1]^{d}}$ and $\lip(f)\leq (1+\Delta(b-a))^{l}$. We may now apply Lemma~\ref{lemma:cts_to_discrete} to $\mu=\rho\leb$, $\nu=\leb|_{[0,1]^{d}}$, $n$, $\mc{T}=\mc{T}_{m}$, $c$, $X=\frac{1}{cn}S$, $Y=\frac{1}{n}[n]^{d}$ and $f$ to get a bijective mapping $\widetilde{g}\colon \frac{1}{c n}\cdot S\to \frac{1}{n}[n]^{d}$ and a constant $\Lambda=\Lambda(d)$ with
	\begin{multline*}
		\lip(\widetilde{g})\leq \Lambda \max\set{\lip(f),1}c\frac{n}{m}\leq\Lambda c2^{\log n-l}(1+\Delta(b-a))^{l}\\
		=\Lambda c \exp\left(\log n-l(1-\log(1+\Delta(b-a)))\right)\leq \Lambda c \exp\left(\log n-l(1-\Delta(b-a))\right).
	\end{multline*}
	Finally, we define $g\colon S\to [n]^{d}$ by 
	\begin{equation*}
		g(x)=n\cdot \widetilde{g}\left(\frac{x}{cn}\right), \qquad x\in S.
	\end{equation*}
\end{proof}

\section{Random sets.}\label{sec:random}
In this section we show that for $C>1$ and large $n$, a random set $S\in \binom{[C^{1/d}n]^{d}}{n^{d}}$ is well-distributed in the sense of Section~\ref{sec:well_dist} with high probability. The statements in this section will be written according to Convention~\ref{conv:randomset}.

Calculating probabilities in the space $\binom{[C^{1/d}n]^{d}}{n^{d}}$ will inevitably lead to expressions involving large binomial coefficients. To estimate these numbers, we will use the following standard lemma which follows easily from Stirling's approximation of the factorial. 

In what follows $H$ denotes the binary entropy function
\begin{equation*}
	H(t)=-t\log t-(1-t)\log(1-t),\qquad t\in [0,1].
\end{equation*}
Later on we will use certain important properties of the binary entropy function $H$, namely that it is strictly convex, differentiable and that its derivative is given by
\begin{equation*}
	H'(t)=-\log\left(\frac{t}{1-t}\right),\qquad t\in (0,1).
\end{equation*}
\begin{lemma}\label{lemma:stirling}
	There is an absolute constant $\Lambda>0$ such that 
	\begin{equation*}
		\begin{cases}
			\Lambda^{-1}\sqrt{\frac{p}{2\pi q(p-q)}}\cdot2^{pH\left(\frac{q}{p}\right)}\leq\binom{p}{q}\leq \Lambda\sqrt{\frac{p}{2\pi q(p-q)}}\cdot 2^{pH\left(\frac{q}{p}\right)} & \text{if }q\in[p-1]\setminus\set{0},\\
			\Lambda^{-1}2^{pH\left(\frac{q}{p}\right)}\leq \binom{p}{q}\leq \Lambda 2^{pH\left(\frac{q}{p}\right)} & \text{if }q\in\set{0,p}.
		\end{cases},\qquad p\in\N.
	\end{equation*}
\end{lemma}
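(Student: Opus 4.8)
The plan is to derive everything from the quantitative form of Stirling's approximation,
\begin{equation*}
n!=\sqrt{2\pi n}\left(\frac{n}{e}\right)^{n}e^{\lambda_{n}},\qquad 0<\lambda_{n}<\frac{1}{12n},\qquad n\in\N,
\end{equation*}
which is classical. First I would dispose of the trivial case $q\in\set{0,p}$: there $\binom{p}{q}=1$, while $pH(q/p)=0$ with the convention $0\log 0=0$, so $2^{pH(q/p)}=1$ and any $\Lambda\geq1$ suffices.

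For $1\leq q\leq p-1$, write $\binom{p}{q}=\frac{p!}{q!(p-q)!}$ and substitute the Stirling expression for each of the three factorials; this is legitimate since $p,q,p-q\geq1$. Collecting the polynomial prefactors and the exponential error terms separately gives
\begin{equation*}
\binom{p}{q}=\frac{1}{\sqrt{2\pi}}\sqrt{\frac{p}{q(p-q)}}\cdot\frac{p^{p}}{q^{q}(p-q)^{p-q}}\cdot e^{\lambda_{p}-\lambda_{q}-\lambda_{p-q}},
\end{equation*}
and since $\lambda_{p}-\lambda_{q}-\lambda_{p-q}\in\left(-\tfrac{1}{6},\tfrac{1}{12}\right)$ the last factor is bounded above and below by absolute constants, which we absorb into $\Lambda$ (for instance $\Lambda=e^{1/6}$ works for both cases simultaneously). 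It then remains only to rewrite the middle factor: putting $t=q/p$, so that $q=tp$ and $p-q=(1-t)p$, the powers of $p$ cancel and
\begin{equation*}
\frac{p^{p}}{q^{q}(p-q)^{p-q}}=t^{-tp}(1-t)^{-(1-t)p}=\left(2^{-t\log t-(1-t)\log(1-t)}\right)^{p}=2^{pH(q/p)},
\end{equation*}
using that $\log$ denotes the base-$2$ logarithm. Substituting this back into the previous display yields exactly the claimed two-sided estimate.

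I do not expect any genuine obstacle; the argument is entirely routine. The only points that call for mild care are (i) checking that the combined error factor $e^{\lambda_{p}-\lambda_{q}-\lambda_{p-q}}$ is controlled by a constant that is genuinely independent of $p$ and $q$ (this is where the uniform bound $\lambda_{n}<\frac{1}{12n}\leq\frac{1}{12}$ matters), and (ii) the algebraic identification of $p^{p}q^{-q}(p-q)^{-(p-q)}$ with $2^{pH(q/p)}$ together with the boundary conventions for $H$ at $0$ and $1$; both are quick.
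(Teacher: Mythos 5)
Your proof is correct and follows essentially the same route as the paper: both apply Stirling's approximation to the three factorials in $\binom{p}{q}=\frac{p!}{q!(p-q)!}$, identify $\frac{p^{p}}{q^{q}(p-q)^{p-q}}=2^{pH(q/p)}$, and dispose of $q\in\set{0,p}$ trivially; the only cosmetic difference is that you track the explicit error exponents $\lambda_{n}<\frac{1}{12n}$, while the paper absorbs the error into the absolute constants $\alpha=\min_{n}\frac{n!}{\sqrt{2\pi n}(n/e)^{n}}$ and $\beta=\max_{n}\frac{n!}{\sqrt{2\pi n}(n/e)^{n}}$.
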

Note that the inequalities of Lemma~\ref{lemma:stirling} for the case $q\in\set{0,p}$ are trivial, because $\binom{p}{0}=\binom{p}{p}=1$. We write them here because we wish to treat the case $q\in\set{0,p}$ together with the case $q\in [p-1]$ later on.
\begin{proof}[Proof of Lemma~\ref{lemma:stirling}]
	By Stirling's Approximation of $n!$ (see for example~\cite{romik2000stirling}), the quantities
	\begin{equation*}
		\alpha:=\inf_{n\in\N}\frac{n!}{\sqrt{2\pi n}\left(\frac{n}{e}\right)^{n}}>0,
	\end{equation*}
	and 
	\begin{equation*}
		\beta:=\sup_{n\in\N}\frac{n!}{\sqrt{2\pi n}\left(\frac{n}{e}\right)^{n}}<\infty,
	\end{equation*}
	are absolute constants. Let $p\in\N$ and $q\in [p-1]\setminus\set{0}$. Then,
	\begin{equation*}
		\binom{p}{q}=\frac{p!}{q!(p-q)!}\leq \frac{\beta}{\alpha^{2}}\cdot \sqrt{\frac{p}{2\pi q(p-q)}}\cdot \frac{p^{p}}{q^{q}(p-q)^{p-q}}=\frac{\beta}{\alpha^{2}}\cdot \sqrt{\frac{p}{2\pi q(p-q)}}\cdot 2^{pH\left(\frac{q}{p}\right)},
	\end{equation*}
	and similarly
	\begin{equation*}
		\binom{p}{q}\geq \frac{\alpha}{\beta^{2}}\cdot \sqrt{\frac{p}{2\pi q(p-q)}}\cdot 2^{pH\left(\frac{q}{p}\right)}. 
	\end{equation*}
	Now let $p\in\N$ and $q\in\set{0,p}$. Then, $H\left(\frac{q}{p}\right)=0$ and
	\begin{equation*}
		\binom{p}{q}=1= 2^{pH\left(\frac{q}{p}\right)}.
	\end{equation*}
	Therefore, we may take $\Lambda=\frac{\max\set{\beta,\beta^{2}}}{\min\set{\alpha,\alpha^{2}}}$.
\end{proof}

\begin{lemma}\label{lemma:vojtech_concave_inequality}
	Let $I\subseteq \R$ be an open interval, $f\colon I\to (0,\infty)$ be a differentiable, concave and strictly increasing function and let $s,t\in I$ with $s<t$ and let $\lambda\in(0,1)$. Then
	\begin{equation*}
		f((1-\lambda)s+\lambda t)\leq \frac{f'(s)}{f'(t)}\cdot\left( (1-\lambda)f(s)+\lambda f(t))\right).
	\end{equation*}
\end{lemma}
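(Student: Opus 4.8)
The plan is to reduce everything to the elementary one-variable consequences of concavity of $f$ (tangent/subgradient bounds) and then do a short algebraic manipulation. Write $u:=(1-\lambda)s+\lambda t$, so that $u-s=\lambda(t-s)$, $t-u=(1-\lambda)(t-s)$, and $s<u<t$. First I would record two easy structural facts. Since $f$ is differentiable and concave, $f'$ is non-increasing on $I$; and $f'>0$ throughout $I$: indeed if $f'(x_{0})=0$ for some $x_{0}\in I$, then, $I$ being open, there is $x_{1}\in I$ with $x_{1}>x_{0}$, and $f'\leq f'(x_{0})=0$ on $[x_{0},x_{1}]$ would force $f(x_{1})\leq f(x_{0})$, contradicting strict monotonicity. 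Consequently $f'(s)\geq f'(t)>0$, and since $f$ is strictly increasing and strictly positive we also have $0<f(s)<f(u)<f(t)$.

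Next I would convert concavity into two one-sided estimates. Integrating $f'$ and using that $f'$ is non-increasing,
\[
f(u)-f(s)=\int_{s}^{u}f'(x)\,\mathrm{d}x\leq f'(s)(u-s)=\lambda(t-s)f'(s),
\]
\[
f(t)-f(u)=\int_{u}^{t}f'(x)\,\mathrm{d}x\geq f'(t)(t-u)=(1-\lambda)(t-s)f'(t).
\]
As every quantity occurring here is strictly positive, these can be rearranged to
\[
\frac{f(u)-f(s)}{\lambda f'(s)}\leq t-s\leq \frac{f(t)-f(u)}{(1-\lambda)f'(t)},
\]
and hence $(1-\lambda)f'(t)\bigl(f(u)-f(s)\bigr)\leq \lambda f'(s)\bigl(f(t)-f(u)\bigr)$.

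Finally I would expand this last inequality and collect the terms containing $f(u)$, obtaining
\[
f(u)\bigl((1-\lambda)f'(t)+\lambda f'(s)\bigr)\leq \lambda f'(s)f(t)+(1-\lambda)f'(t)f(s).
\]
On the left I use $(1-\lambda)f'(t)+\lambda f'(s)\geq f'(t)$ (because $f'(s)\geq f'(t)$) together with $f(u)>0$; on the right I use $f'(t)f(s)\leq f'(s)f(s)$ (again $f'(t)\leq f'(s)$, and $(1-\lambda)f(s)>0$). This gives $f(u)f'(t)\leq f'(s)\bigl((1-\lambda)f(s)+\lambda f(t)\bigr)$, and dividing by $f'(t)>0$ yields the claimed inequality. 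I do not expect a real obstacle here: the only points needing a little care are the verification that $f'>0$ (so that the divisions are legitimate) and keeping the inequality directions straight when cross-multiplying, which is harmless since $s<u<t$ and $f>0$ force all relevant factors to be strictly positive.
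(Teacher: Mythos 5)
Your proof is correct. It rests on the same two ingredients as the paper's argument --- the monotonicity of $f'$ coming from concavity, and the positivity of $f$ combined with $f'(s)\geq f'(t)$ --- but it is organized differently. The paper first reduces to the normalized case $f(s)=0$ (a reduction that uses exactly the positivity/monotonicity facts you invoke explicitly at the end, and which the paper dismisses as ``readily verified''); after that normalization the claim collapses to the single estimate $\frac{\int_{s}^{(1-\lambda)s+\lambda t}f'\,du}{\lambda\int_{s}^{t}f'\,du}\leq\frac{f'(s)}{f'(t)}$, obtained by bounding the numerator above by $f'(s)\lambda(t-s)$ and the denominator below by $f'(t)\lambda(t-s)$. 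You instead keep $f(s)$ general, cross-multiply the two tangent-type bounds and absorb the extra terms using $f'(s)\geq f'(t)$ and $f(s),f(u)>0$; this is slightly longer but makes every use of the hypotheses visible, and in effect carries out in detail the step the paper leaves to the reader. Your preliminary observation that $f'>0$ on the open interval $I$ (so that dividing by $f'(t)$ is legitimate) is also a point the paper leaves implicit, and is worth having on record. Either route is perfectly adequate.
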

\begin{proof}
	If the inequality holds for the function $g(u):=f(u)-f(s)$ in place of $f$ then it also holds for $f$. This is readily verified using the concavity and positivity of $f$. Thus, we may assume that $f(s)=0$. This allows us to write
	\begin{equation*}
		\frac{f((1-\lambda)s+\lambda t)}{(1-\lambda)f(s)+\lambda f(t)}=\frac{\int_{s}^{(1-\lambda)s+\lambda t}f'(u)\,du}{\lambda \int_{s}^{t}f'(u)\,du}\leq \frac{f'(s)}{f'(t)}.
	\end{equation*}
\end{proof}

\begin{lemma}\label{lemma:bonnet}
	Let $\delta\in [0,\frac{1}{2})$, $N\in\N$, $M>1$, $\frac{1}{2}<a<1-\delta$, $1+2\delta<b<2$, $X$ be a finite set with $\abs{X}>bN$ and $Y\subseteq X$ be a set with
	\begin{equation}\label{eq:Y_cardinality}
		\frac{\left(1-\delta\right)\abs{X}}{M}\leq \abs{Y}\leq \frac{\left(1+\delta\right)\abs{X}}{M}.
	\end{equation}
	Then, there is an absolute constant $\Gamma>0$ such that a random set $S\in \binom{X}{N}$ satisfies
	\begin{multline}\label{eq:bonnet_prob<a}
		\Prob\left[\abs{S\cap Y}\leq \frac{aN}{M}\right]\\
		\leq \Gamma\cdot \sqrt{\frac{\abs{X}-N}{\abs{X}\left(1-\frac{2}{M}\right)-N}}\cdot\frac{N^{3/2}}{M}\cdot \exp\left(-\frac{(1-(a+\delta))^{2}N(\abs{X}-N)}{\Gamma M(\abs{X}-(a+\delta)N)}\right)
	\end{multline}
	and
	\begin{multline}\label{eq:bonnet_prob>b}
		\Prob\left[\abs{S\cap Y}\geq \frac{bN}{M}\right]\\
		\leq \Gamma\cdot \sqrt{\frac{\abs{X}-N}{\abs{X}\left(1-\frac{2}{M}\right)-N}}\cdot N^{3/2}\cdot\exp\left(-\frac{(b-2\delta-1)^{2}N(\abs{X}-(b-2\delta)N)}{\Gamma M(\abs{X}-N)}\right).
	\end{multline}
\end{lemma}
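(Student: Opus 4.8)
\textbf{Proof strategy.} The plan is to recognise $\abs{S\cap Y}$ as a hypergeometric random variable and to bound its two one-sided tails by a ``largest term times number of terms'' estimate, using Lemma~\ref{lemma:stirling} to pass to the binary entropy function and Lemma~\ref{lemma:vojtech_concave_inequality} to extract the explicit rational exponent. For a fixed $Y$ one has
\[
\Prob\bigl[\abs{S\cap Y}=k\bigr]=\binom{\abs Y}{k}\binom{\abs X-\abs Y}{N-k}\Big/\binom{\abs X}{N},\qquad 0\le k\le N,
\]
and this sequence in $k$ is log-concave, with maximum at $k^{\ast}:=N\abs Y/\abs X$. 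Hence a tail $\{\abs{S\cap Y}\le K\}$ with $K\le k^{\ast}$ has probability at most $(K+1)$ times the single term at $k=\lfloor K\rfloor$, and a tail $\{\abs{S\cap Y}\ge K\}$ with $K\ge k^{\ast}$ has probability at most $N$ times the term at $k=\lceil K\rceil$; so both \eqref{eq:bonnet_prob<a} and \eqref{eq:bonnet_prob>b} reduce to estimating one hypergeometric probability, up to a polynomial-in-$N$ counting factor.

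First I would remove the interval hypothesis \eqref{eq:Y_cardinality}. Since $\abs{S\cap Y}$ is pointwise nondecreasing in $Y$, the probability in \eqref{eq:bonnet_prob<a} only increases when $\abs Y$ is replaced by its least admissible value, close to $(1-\delta)\abs X/M$, and the probability in \eqref{eq:bonnet_prob>b} only increases when $\abs Y$ takes its largest value, close to $(1+\delta)\abs X/M$. A short computation using $a<1-\delta<1$ and $1+2\delta<b<2$ gives $\tfrac a{1-\delta}\le a+\delta$ and $\tfrac b{1+\delta}\ge b-2\delta$, so that for this choice of $\abs Y$
\begin{gather*}
\bigl\{\abs{S\cap Y}\le\tfrac{aN}{M}\bigr\}\subseteq\bigl\{\abs{S\cap Y}\le(a+\delta)k^{\ast}\bigr\},\\
\bigl\{\abs{S\cap Y}\ge\tfrac{bN}{M}\bigr\}\subseteq\bigl\{\abs{S\cap Y}\ge(b-2\delta)k^{\ast}\bigr\}.
\end{gather*}
This is the origin of the shifted parameters $a+\delta$ and $b-2\delta$ in the conclusion. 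The two tails can then be treated symmetrically, exploiting that $\abs{S\cap Y}=N-\abs{S\cap(X\setminus Y)}$ turns an upper tail for $Y$ into a lower tail for $X\setminus Y$; this accounts for the transposed roles of $\abs X-tN$ and $\abs X-N$ in \eqref{eq:bonnet_prob<a} as against \eqref{eq:bonnet_prob>b}.

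It remains to bound the hypergeometric term at a threshold $k=tk^{\ast}$ with $t\in\{a+\delta,\,b-2\delta\}$. Applying Lemma~\ref{lemma:stirling} to the three binomial coefficients, the exponential part is $2^{g(t)}$ with
\[
g(t)=\abs Y\,H\!\Bigl(\tfrac{tk^{\ast}}{\abs Y}\Bigr)+(\abs X-\abs Y)\,H\!\Bigl(\tfrac{N-tk^{\ast}}{\abs X-\abs Y}\Bigr)-\abs X\,H\!\Bigl(\tfrac{N}{\abs X}\Bigr),
\]
while the square-root corrections of Lemma~\ref{lemma:stirling}, multiplied by the $O(N)$ resp.\ $O(N/M)$ counting factors and simplified using $\abs Y\approx\abs X/M$ and $\abs X-\abs Y\ge\abs X(1-\tfrac2M)$, yield precisely the polynomial prefactors displayed in \eqref{eq:bonnet_prob<a} and \eqref{eq:bonnet_prob>b}. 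The heart of the argument, and the step I expect to be the main obstacle, is to show that $-g(t)$ is at least the rational expression in the respective exponent. Since $g(k^{\ast})=0$ and $g$ is concave in its argument, $-g(tk^{\ast})=\int_{tk^{\ast}}^{k^{\ast}}g'(u)\,\mathrm{d}u$, and the issue is to lower bound this entropy gap by the clean closed form $\tfrac{(1-t)^{2}N(\abs X-N)}{\Gamma M(\abs X-tN)}$ (rather than by a cruder Gaussian surrogate, which is too weak when $N$ is close to $\abs X$). This is exactly the situation Lemma~\ref{lemma:vojtech_concave_inequality} is tailored for: applied to the concave increasing entropy function $H$, it converts a difference of entropy values into a ratio of the derivatives $H'(t)=-\log\bigl(t/(1-t)\bigr)$, that is, into logarithms which can then be bounded below by the stated algebraic quantity. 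Degenerate endpoints, where $tk^{\ast}$ is $0$ or non-integral, are absorbed by the second case of Lemma~\ref{lemma:stirling} and by rounding, at the cost of constants folded into $\Gamma$.
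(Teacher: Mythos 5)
Your strategy matches the paper's in outline: bound each tail by (number of terms) times a uniform bound on a single hypergeometric term, use Lemma~\ref{lemma:stirling} to pass to the binary entropy function, exploit the slack in \eqref{eq:Y_cardinality} to shift $a$ to $a+\delta$ and $b$ to $b-2\delta$, and invoke Lemma~\ref{lemma:vojtech_concave_inequality} at the end; your monotonicity reduction in $\abs{Y}$ and the symmetry $\abs{S\cap Y}=N-\abs{S\cap(X\setminus Y)}$ are harmless variations, and your accounting for the prefactors $\sqrt{\tfrac{\abs{X}-N}{\abs{X}(1-\frac{2}{M})-N}}$, $N^{3/2}/M$ and $N^{3/2}$ agrees with the paper's.

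However, the decisive step --- showing that the entropy gap is at least $\tfrac{(1-(a+\delta))^{2}N(\abs{X}-N)}{\Gamma M(\abs{X}-(a+\delta)N)}$, respectively the analogous quantity in \eqref{eq:bonnet_prob>b} --- is exactly the part you leave undone, and the route you indicate for it does not work as stated. You propose to apply Lemma~\ref{lemma:vojtech_concave_inequality} ``to the concave increasing entropy function $H$'', but that lemma requires a strictly increasing function, and $H$ is increasing only on $(0,\tfrac12)$; in the generality of the lemma the relevant arguments ($N/\abs{X}$, which is only constrained to be below $1/b$, and the ratios $k/\abs{Y}$, $(N-k)/(\abs{X}-\abs{Y})$ near the upper tail) can lie well above $\tfrac12$, so the hypotheses fail. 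Moreover, even where $H$ is increasing, the lemma addresses a convex combination $f((1-\lambda)s+\lambda t)$ versus $(1-\lambda)f(s)+\lambda f(t)$ and does not directly apply to the three-term entropy expression $\tfrac{\abs{X}}{\abs{Y}}H(\tfrac{N}{\abs{X}})-H(\tfrac{k}{\abs{Y}})-(\tfrac{\abs{X}}{\abs{Y}}-1)H(\tfrac{N-k}{\abs{X}-\abs{Y}})$. The paper's proof proceeds differently at this point: it uses the mean value theorem and the strict concavity of $H$, together with the orderings of $k/\abs{Y}$, $N/\abs{X}$ and $(N-k)/(\abs{X}-\abs{Y})$ obtained from \eqref{eq:Y_cardinality} and the constraints on $a,b,\delta$, to write the gap as an integral of $H'(t)-H'(\xi_k)$, shrinks the integration range to $[\,(a+\delta)N/\abs{X},\,N/\abs{X}\,]$ (resp.\ $[\,N/\abs{X},\,(b-2\delta)N/\abs{X}\,]$), evaluates it explicitly into a convex combination of values of $\log$, and only then applies Lemma~\ref{lemma:vojtech_concave_inequality} with $f=\log$ (not $H$), finishing with $\log(1+x)\ge \tfrac{x}{1+x}$. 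That explicit computation is the substance of the proof and is what produces the stated rational exponents; your sketch predicts the outcome but does not supply an argument that would deliver it.
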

\begin{proof}
	The probabilities considered in \eqref{eq:bonnet_prob<a} and \eqref{eq:bonnet_prob>b} are bounded above by
	\begin{equation*}
		\binom{\abs{X}}{N}^{-1}\sum_{k}\binom{\abs{Y}}{k}\binom{\abs{X}-\abs{Y}}{N-k},
	\end{equation*}
	where the sum is taken over $0\leq k\leq\frac{aN}{M}$ for \eqref{eq:bonnet_prob<a} and over $\frac{bN}{M}\leq k\leq \min\set{N,\abs{Y}}$ for \eqref{eq:bonnet_prob>b}. Our first aim is to establish an upper bound for the quantity
	\begin{equation}\label{eq:bin_coeff_triple}
		\binom{\abs{X}}{N}^{-1}\binom{\abs{Y}}{k}\binom{\abs{X}-\abs{Y}}{N-k}
	\end{equation}
	for $0\leq k\leq \min\set{N,\abs{Y}}$. 
	
	Fix $0\leq k\leq \min\set{N,\abs{Y}}$ and define
	\begin{align*}
		V_{k}&:=\begin{cases}
			\sqrt{\frac{\abs{Y}}{k(\abs{Y}-k)}} & \text{ if }k\notin \set{0,\abs{Y}},\\
			1 & \text{ if }k\in\set{0,\abs{Y}},
		\end{cases}\\
		W_{k}&:=\begin{cases}
			\sqrt{\frac{\abs{X}-\abs{Y}}{(N-k)(\abs{X}-\abs{Y}-N+k)}} & \text{ if }N-k\notin \set{0,\abs{X}-\abs{Y}},\\
			1 & \text{ if }N-k\in\set{0,\abs{X}-\abs{Y}}.
		\end{cases}
	\end{align*}
	Then, we may use Lemma~\ref{lemma:stirling} to bound the product in \eqref{eq:bin_coeff_triple} above by
	\begin{equation}\label{eq:app_stirling}
		\Lambda \cdot\left(\sqrt{\frac{\abs{X}}{N(\abs{X}-N)}}\right)^{-1}V_{k}W_{k}\cdot\exp\left(-\abs{Y}\gamma_{k}\right),
	\end{equation}
	where $\Lambda>0$ is an absolute constant and
	\begin{equation}\label{eq:formula_gamma_k}
		\gamma_{k}:=\frac{\abs{X}}{\abs{Y}}H\left(\frac{N}{\abs{X}}\right)-H\left(\frac{k}{\abs{Y}}\right)-\left(\frac{\abs{X}}{\abs{Y}}-1\right)H\left(\frac{N-k}{\abs{X}-\abs{Y}}\right).
	\end{equation}
	The product $\left(\sqrt{\frac{\abs{X}}{N(\abs{X}-N)}}\right)^{-1}V_{k}W_{k}$ in \eqref{eq:app_stirling} may be bounded above by
	\begin{multline*}
		\sqrt{\frac{\abs{Y}}{\abs{Y}-1}}\cdot\sqrt{\frac{N}{N-(N-1)}}\cdot\sqrt{\frac{\abs{X}-N}{\abs{X}-N-\abs{Y}}}
		\leq \sqrt{2}\sqrt{N}\sqrt{\frac{\abs{X}-N}{\abs{X}-N-\abs{Y}}}.
	\end{multline*}
	Therefore, we obtain an absolute constant $\Gamma>0$ such that
	\begin{equation}\label{eq:new_upper_bound}
		\binom{\abs{X}}{N}^{-1}\binom{\abs{Y}}{k}\binom{\abs{X}-\abs{Y}}{N-k}\leq \Gamma \sqrt{\frac{N(\abs{X}-N)}{\abs{X}-N-\abs{Y}}}\cdot \exp\left(-\abs{Y}\gamma_{k}\right).
	\end{equation} 
	
	Our task is now to establish a lower bound on $\gamma_{k}$. To this end, we rewrite the formula~\eqref{eq:formula_gamma_k} for $\gamma_{k}$ as
	\begin{align*}
		\gamma_{k}&=H\left(\frac{N}{\abs{X}}\right)-H\left(\frac{k}{\abs{Y}}\right)-\left(\frac{\abs{X}}{\abs{Y}}-1\right)\left(H\left(\frac{N-k}{\abs{X}-\abs{Y}}\right)-H\left(\frac{N}{\abs{X}}\right)\right)\\
		&=H\left(\frac{N}{\abs{X}}\right)-H\left(\frac{k}{\abs{Y}}\right)-\left(\frac{N}{\abs{X}}-\frac{k}{\abs{Y}}\right)H'(\xi_{k})
	\end{align*}
	for some $\xi_{k}$ lying in the interval with endpoints $\frac{N}{\abs{X}}$ and $\frac{N-k}{\abs{X}-\abs{Y}}$. From the bounds on $\abs{Y}$, $a$ and $b$ given by the hypothesis of the lemma, we have
	\begin{align}
		\frac{k}{\abs{Y}}<\frac{N}{\abs{X}}\leq \xi_{k}\leq\frac{N-k}{\abs{X}-\abs{Y}} \qquad &\text{if $0\leq k\leq \frac{aN}{M}$,} \label{eq:xi_k_1}\\
		\frac{N-k}{\abs{X}-\abs{Y}}\leq \xi_{k}\leq\frac{N}{\abs{X}}<\frac{k}{\abs{Y}}\qquad &\text{if $\frac{bN}{M}\leq k\leq \min\set{N,\abs{Y}}$.} \label{eq:xi_k_2}
	\end{align}
	Assume first, that $0\leq k\leq \frac{aN}{M}$.
	Then \eqref{eq:xi_k_1}, together with that fact that $H$ is strictly concave, allows us to write
	\begin{multline*}
		\gamma_{k}=\int_{\frac{k}{\abs{Y}}}^{\frac{N}{\abs{X}}}H'(t)-H'(\xi_{k})\,dt\geq \int_{\frac{(a+\delta)N}{\abs{X}}}^{\frac{N}{\abs{X}}}H'(t)-H'\left(\frac{N}{\abs{X}}\right)\,dt\\
		\geq H\left(\frac{N}{\abs{X}}\right)-H\left(\frac{((a+\delta)N}{\abs{X}}\right)-\frac{(1-(a+\delta))N}{\abs{X}}H'\left(\frac{N}{\abs{X}}\right)\\
		=-\frac{N}{\abs{X}}\log \left(\frac{N}{\abs{X}}\right)-\left(1-\frac{N}{\abs{X}}\right)\log \left(1-\frac{N}{\abs{X}}\right)+\frac{(a+\delta)N}{\abs{X}}\log\left(\frac{(a+\delta)N}{\abs{X}}\right)\\+\left(1-\frac{(a+\delta)N}{\abs{X}}\right)\log\left(1-\frac{(a+\delta)N}{\abs{X}}\right)+\frac{(1-(a+\delta))N}{\abs{X}}\log\left(\frac{\frac{N}{\abs{X}}}{1-\frac{N}{\abs{X}}}\right)\\
		=\left(1-\frac{(a+\delta)N}{\abs{X}}\right)\log\left(\frac{\abs{X}-(a+\delta)N}{\abs{X}-N}\right)+\frac{(a+\delta)N}{\abs{X}} \log(a+\delta).
	\end{multline*}
	Finally, we apply Lemma~\ref{lemma:vojtech_concave_inequality} to $I=(0,\infty)$, $f=\log$, $s=a$ and $t=\frac{\abs{X}-(a+\delta)N}{\abs{X}-N}$ in order to bound the latter expression below by
	\begin{equation}\label{eq:final_gamma_k_bound1}
		\frac{\log'\left(\frac{\abs{X}-(a+\delta)N}{\abs{X}-N}\right)}{\log'(a+\delta)}\log\left(1+\frac{(1-(a+\delta))^{2}N}{\abs{X}-N}\right)\geq \frac{(1-(a+\delta))^{2}N(\abs{X}-N)}{2\abs{X}(\abs{X}-(a+\delta)N)},
	\end{equation}
	where the latter inequality is derived by applying the inequality $\log(1+x)\geq \frac{x}{1+x}$. Similarly, if $\frac{bN}{M}\leq k\leq \min\set{N,\abs{Y}}$, we use \eqref{eq:xi_k_2} and the strict concavity of $H$ to derive
	\begin{align}
		\gamma_{k}=\int_{\frac{N}{\abs{X}}}^{\frac{k}{\abs{Y}}}H'(\xi_{k})-H'(t)\,dt&\geq \int_{\frac{N}{\abs{X}}}^{\frac{(b-2\delta)N}{\abs{X}}}H'\left(\frac{N}{\abs{X}}\right)-H'(t)\,dt\nonumber \\
		&\geq \frac{(b-2\delta-1)N}{\abs{X}}H'\left(\frac{N}{\abs{X}}\right)-\left(H\left(\frac{(b-2\delta)N}{\abs{X}}\right)-H\left(\frac{N}{\abs{X}}\right)\right)\nonumber\\
		&\geq\frac{-(b-2\delta-1)N}{\abs{X}}\log\left(\frac{\frac{N}{\abs{X}}}{1-\frac{N}{\abs{X}}}\right)+\frac{(b-2\delta)N}{\abs{X}}\log \left(\frac{(b-2\delta)N}{\abs{X}}\right)\nonumber \\
		&\qquad+\left(1-\frac{(b-2\delta)N}{\abs{X}}\right)\log \left(1-\frac{(b-2\delta)N}{\abs{X}}\right)\nonumber\\
		&\qquad-\frac{N}{\abs{X}}\log\left(\frac{N}{\abs{X}}\right)-\left(1-\frac{N}{\abs{X}}\right)\log\left(1-\frac{N}{\abs{X}}\right)\nonumber\\
		&=\frac{(b-2\delta)N}{\abs{X}}\log(b-2\delta)+\left(1-\frac{(b-2\delta)N}{\abs{X}}\right)\log\left(\frac{\abs{X}-(b-2\delta)N}{\abs{X}-N}\right)\nonumber\\
		&\geq \frac{\log'(b-2\delta)}{\log'\left(\frac{\abs{X}-(b-2\delta)N}{\abs{X}-N}\right)}\cdot \log\left(1+\frac{(b-2\delta-1)^{2}N}{\abs{X}-N}\right)\nonumber\\
		&\geq \frac{(b-2\delta-1)^{2}N(\abs{X}-(b-2\delta)N)}{2\abs{X}(\abs{X}-N)}. \label{eq:final_gamma_k_bound2}
	\end{align}	
	Finally, we substitute the lower bounds \eqref{eq:final_gamma_k_bound1} and \eqref{eq:final_gamma_k_bound2} for $\gamma_{k}$ into \eqref{eq:new_upper_bound}, to acquire upper bounds on the product in \eqref{eq:bin_coeff_triple} in the cases $0\leq k\leq \frac{aN}{M}$ and $\frac{bN}{M}\leq k\leq \min\set{N,\abs{Y}}$ respectively. Moreover, in both cases these upper bounds are independent of $k$. Thus, by summing the relevant upper bounds over $0\leq k\leq \frac{aN}{M}$ and $\frac{bN}{M}\leq k\leq \min\set{N,\abs{Y}}$ respectively and additionally applying the bounds on $\abs{Y}$ from \eqref{eq:Y_cardinality}, we establish \eqref{eq:bonnet_prob<a} and \eqref{eq:bonnet_prob>b}. In case of possible future relevance, we point out that the factor $N^{3/2}$ in \eqref{eq:bonnet_prob>b} may be replaced by $N^{1/2}\cdot\min\set{N,\abs{Y}}$. This comes from keeping the term $\min\set{N,\abs{Y}}$ when summing over $\frac{bN}{M}\leq k\leq \min\set{N,\abs{Y}}$, rather than bounding it above by $N$, as we do, for simplicity, to get \eqref{eq:bonnet_prob>b}.
\end{proof}
\begin{lemma}\label{lemma:log_threshold_prob}
	Let $d,n,m\in\N$ and $C,q\in\R$ with
	\begin{equation}\label{eq:cond_m_C_q}
		\frac{n}{2(\log n)^{q}}\leq m\leq \frac{2n}{(\log n)^{q}},\qquad C\geq 1+\frac{2^{d+7}}{\log n},\qquad \begin{cases} q\geq 1 & \text{ if }\frac{C^{1/d}n}{m}\notin \Z \\
			q> 0 & \text{ if }\frac{C^{1/d}n}{m}\in \Z 
		\end{cases}.
	\end{equation}
	Let $\mc{T}_{m}$ be defined by \eqref{def:Tk}. Then there exists a constant $\Gamma=\Gamma(d)>0$ such that a random set $S\in \binom{[C^{1/d}n]^{d}}{n^{d}}$ satisfies
	\begin{multline}\label{eq:lower_log}
		\Prob\left[\exists T\in\mc{T}_{m}\text{ s.t. }\abs{S\cap \left(C^{1/d}n\cdot T\right)}
		\leq \frac{\left(1-\frac{\Gamma}{\log n}\right)n^{d}}{m^{d}}\right]\\
		\leq \Gamma n^{ \Gamma}\exp\left(-\frac{(\log n)^{qd-2}}{\Gamma}\right)
	\end{multline}
	and
	\begin{multline}\label{eq:upper_log}
		\Prob\left[\exists T\in\mc{T}_{m}\text{ s.t. }\abs{S\cap \left(C^{1/d}n\cdot T\right)}\geq \frac{\left(1+\frac{\Gamma}{\log n}\right)n^{d}}{m^{d}}\right]\\
		\leq \Gamma  n^{\Gamma}\exp\left(-\frac{(\log n)^{qd-2}}{\Gamma}\right).
	\end{multline} 
\end{lemma}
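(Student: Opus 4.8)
The plan is to fix a single cell $T\in\mc{T}_{m}$, recognise $\abs{S\cap(C^{1/d}n\cdot T)}$ as a hypergeometric random variable to which Lemma~\ref{lemma:bonnet} applies, and then take a union bound over the $m^{d}\le(2n)^{d}$ cells. Concretely I would invoke Lemma~\ref{lemma:bonnet} with $X=[C^{1/d}n]^{d}$, $N=n^{d}$, $M=m^{d}$, $Y=Y_{T}:=X\cap(C^{1/d}n\cdot T)$, and with thresholds $a=1-\Gamma_{0}/\log n$, $b=1+\Gamma_{0}/\log n$ for a constant $\Gamma_{0}=\Gamma_{0}(d)$ to be fixed. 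Since $S\subseteq X$ we have $\abs{S\cap(C^{1/d}n\cdot T)}=\abs{S\cap Y_{T}}$, while $aN/M=(1-\Gamma_{0}/\log n)n^{d}/m^{d}$; hence the event in \eqref{eq:lower_log} is contained in $\bigcup_{T\in\mc{T}_{m}}\{\abs{S\cap Y_{T}}\le aN/M\}$ as soon as $\Gamma\ge\Gamma_{0}$, and symmetrically the event in \eqref{eq:upper_log} is contained in $\bigcup_{T}\{\abs{S\cap Y_{T}}\ge bN/M\}$. So it suffices to bound the probabilities \eqref{eq:bonnet_prob<a}, \eqref{eq:bonnet_prob>b} for each $T$ and multiply by $m^{d}$.

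The first step is the elementary lattice count giving the fluctuation parameter $\delta$. Writing $s=C^{1/d}n/m$ and $T=\prod_{i}(\tfrac{p_{i}}{m},\tfrac{p_{i}+1}{m}]$, the $i$-th edge of $C^{1/d}n\cdot T$ contains $\floor{(p_{i}+1)s}-\floor{p_{i}s}\in\{\floor{s},\ceil{s}\}$ integers, so $\abs{Y_{T}}\in[\floor{s}^{d},\ceil{s}^{d}]$; likewise $\abs{X}/m^{d}=(\floor{C^{1/d}n}/m)^{d}$ lies in the same interval. If $s\in\Z$ then $C^{1/d}n=sm\in\Z$, so $\floor{C^{1/d}n}=C^{1/d}n$ and $\abs{Y_{T}}=s^{d}=\abs{X}/m^{d}$ exactly; here $\delta=0$ works, which is why the hypothesis then asks only $q>0$. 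If $s\notin\Z$, the constraints $q\ge1$ and $C\ge1$ give $s\ge\tfrac12\log n$, hence $\floor{s}\gtrsim\log n$, and then $(1+1/\floor{s})^{d}\le1+\tfrac{2d}{\floor{s}}$ (valid once $\floor{s}\ge d$) yields a single $\delta=\delta(d,n,m,C)\le\Gamma_{1}(d)/\log n$, $\Gamma_{1}(d)=O(d)$, with $\tfrac{(1-\delta)\abs{X}}{m^{d}}\le\abs{Y_{T}}\le\tfrac{(1+\delta)\abs{X}}{m^{d}}$ for every cell. Obtaining $O(d)$ here — rather than the wasteful $O(d2^{d})$ of a crude binomial expansion — will be essential below.

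Next I would verify the hypotheses of Lemma~\ref{lemma:bonnet} and fix $\Gamma_{0}$. For large $n$ the requirements $\tfrac12<a<1-\delta$ and $1+2\delta<b<2$ reduce to $\Gamma_{0}/\log n<\tfrac12$ together with $\Gamma_{0}>2\Gamma_{1}(d)$. The requirement $\abs{X}>bN$ is where the hypothesis on $C$ enters: from $C\ge1+\tfrac{2^{d+7}}{\log n}$ and the elementary estimates $(1+x)^{1/d}\ge1+\tfrac{x}{2d}$ (for $0<x\le1$) and $(1+\tfrac{y}{d})^{d}\ge1+y$ one gets $\abs{X}=\floor{C^{1/d}n}^{d}\ge(C^{1/d}n-1)^{d}\ge(1+\tfrac{2^{d+5}}{\log n})n^{d}$ for large $n$, so $\abs{X}>bN$ whenever $\Gamma_{0}<2^{d+5}$. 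As $2\Gamma_{1}(d)=O(d)<2^{d+5}$, both constraints on $\Gamma_{0}$ can be met; I would fix, say, $\Gamma_{0}:=2^{d+4}$. Lemma~\ref{lemma:bonnet} then applies to each $Y_{T}$.

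Finally I would estimate \eqref{eq:bonnet_prob<a}, \eqref{eq:bonnet_prob>b}. From $\abs{X}-N\ge\tfrac{2^{d+5}}{\log n}n^{d}$ and the fact that $\abs{X}-(a+\delta)N$ and $\abs{X}-(b-2\delta)N$ each differ from $\abs{X}-N$ by at most $(\Gamma_{0}/\log n)n^{d}\le\tfrac12(\abs{X}-N)$, the ratios occurring in the two exponents are bounded below by absolute constants; together with $1-(a+\delta),\,b-2\delta-1\ge\tfrac{\Gamma_{0}}{2\log n}$ and $m^{d}\le\tfrac{2^{d}n^{d}}{(\log n)^{qd}}$ this makes both exponents at most $-c(d)(\log n)^{qd-2}$, while the prefactors — the union-bound factor $m^{d}\le(2n)^{d}$, the terms $N^{3/2}/M\le N^{3/2}=n^{3d/2}$, and the square root $\sqrt{(\abs{X}-N)/(\abs{X}(1-2/M)-N)}$, which is $O(1)$ for $n$ large since then $2\abs{X}/m^{d}\le\tfrac12(\abs{X}-N)$ — are all bounded by $n^{C(d)}$. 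Hence for $n$ beyond a $d$-dependent threshold, the left-hand sides of \eqref{eq:lower_log}, \eqref{eq:upper_log} are $\le n^{C(d)}\exp(-c(d)(\log n)^{qd-2})\le\Gamma n^{\Gamma}\exp(-(\log n)^{qd-2}/\Gamma)$ for $\Gamma=\Gamma(d)$ chosen large enough; below the threshold the inequalities are trivial (the admissible range for $m$ is empty, or $m=1$ makes both events empty for $\Gamma>0$, or $(\log n)^{qd-2}$ is bounded in terms of $d$ alone — since $m\ge1$ forces $(\log n)^{q}\le2n$ — so a further $d$-dependent enlargement of $\Gamma$ forces the right-hand side above $1$). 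The step I expect to be the main obstacle is exactly the reconciliation of the two bounds on $\Gamma_{0}$ in the third paragraph: the cap $\Gamma_{0}<2^{d+5}$ imposed by $\abs{X}>bN$ versus the lower bound $\Gamma_{0}>2\delta\log n$ imposed by $a<1-\delta$ and $b>1+2\delta$, which is possible only because the lattice count yields $\delta=O(d/\log n)$; everything else, including the degenerate small-$n$, small-$m$ and large-$C$ regimes, is routine (if slightly fiddly) bookkeeping.
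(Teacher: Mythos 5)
Your proposal is correct and follows essentially the same route as the paper: fix a cell, view $\abs{S\cap Y_{T}}$ as the hypergeometric quantity of Lemma~\ref{lemma:bonnet} with $X=[C^{1/d}n]^{d}$, $N=n^{d}$, $M=m^{d}$, thresholds $a,b=1\mp\Theta(2^{d})/\log n$, a lattice-count estimate for $\delta$, the hypothesis on $C$ to secure $\abs{X}>bN$ and to control the ratios in the exponents, and a union bound over the $m^{d}\le 2^{d}n^{d}/(\log n)^{qd}$ cells (the paper takes $a=1-2^{d+5}/\log n$, $b=1+2^{d+5}/\log n$ and $\delta=2^{d+2}m/n$ in the non-integer case, $\delta=0$ otherwise). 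Your only questionable remark is that the sharper count $\delta=O(d/\log n)$ is ``essential'': the cruder bound $\delta\le 2^{d+3}/(\log n)^{q}$ used in the paper also fits under the cap coming from $\abs{X}>bN$, so this refinement is convenient but not needed.
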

\begin{proof}
	In the present proof, $\Gamma$ will always denote a (large) constant which may depend only on $d$ and whose value is allowed to increase in each occurence. So, to give an example of the use of this convention, we would write the inequality $\Gamma n^{d}+dn\leq \Gamma n^{d}$ for $n\in\N$ instead of writing $\Gamma n^{d}+dn\leq (\Gamma+d)n^{d}$. Moreover, we point out that it suffices to verify the conclusions \eqref{eq:lower_log} and \eqref{eq:upper_log} of the lemma with an additional assumption that $n$ is larger than some threshold depending only on $d$. The finitely many remaining $n\in\N$ can then be treated by adjusting the constant $\Gamma=\Gamma(d)$ if necessary. Therefore, in the present proof, every inequality involving $n$ should be read with an additional condition that $n$ is sufficiently large, where the sufficiently large condition depends only on $d$.  
	
	Fix $T\in \mc{T}_{m}$ and let $X:=[C^{1/d}n]^{d}$ and $Y:=\left(C^{1/d}n\cdot T\right)\cap X$. Then,
	\begin{align*}
		Cn^{d}\left(1-\frac{2^{d}}{n}\right)\leq \abs{X}&\leq Cn^{d},\quad \text{ and}\\
		\frac{Cn^{d}}{m^{d}}\left(1-\frac{2^{d} m}{n}\right)\leq \abs{Y}&\leq \frac{Cn^{d}}{m^{d}}\left(1+\frac{2^{d} m}{n}\right).
	\end{align*}
	These inequalities imply
	\begin{equation*}
		\frac{\left(1-\frac{2^{d}m}{n}\right)\abs{X}}{m^{d}}\leq \abs{Y}\leq \frac{\left(1+\frac{2^{d+2}m}{n}\right)\abs{X}}{m^{d}}.
	\end{equation*}
	In the special case that $\frac{C^{1/d}n}{m}\in \Z$, we note that $\abs{Y}=\frac{Cn^{d}}{m^{d}}=\frac{\abs{X}}{m^{d}}$.
	
	Set $N=n^{d}$, $M:=m^{d}$ and $\delta:=\begin{cases}
		\frac{2^{d+2}m}{n} & \text{if } \frac{C^{1/d}n}{m}\notin \Z\\
		0 & \text{if } \frac{C^{1/d}n}{m}\in \Z
	\end{cases}$, so that \eqref{eq:Y_cardinality} is satisfied and
	\begin{equation}\label{eq:delta}
		\frac{2^{d+1}}{(\log n)^{q}}\leq \delta\leq \frac{2^{d+3}}{(\log n)^{q}}, \qquad\text{ if }\frac{C^{1/d}n}{m}\notin \Z.
	\end{equation}
	We apply Lemma~\ref{lemma:bonnet} to $\delta$, $N$, $M$, $a:=1-\frac{2^{d+5}}{\log n}$, $b=1+\frac{2^{d+5}}{\log n}$, $X$ and $Y$. After applying the bounds or substituting the values for the parameters, the probability inequalities \eqref{eq:bonnet_prob<a} and \eqref{eq:bonnet_prob>b} given by Lemma~\ref{lemma:bonnet} become
	\begin{equation}\label{eq:plug_evryth_in_1}
		\Prob\left[\abs{S\cap \left(C^{1/d}n\cdot T\right)}\leq \frac{\left(1-\frac{2^{d+5}}{\log n}\right)n^{d}}{m^{d}}\right]
		\leq \Gamma n^{d/2}(\log n)^{qd}\exp\left(-\frac{(\log n)^{qd-2}}{\Gamma}\right),
	\end{equation}
	and
	\begin{equation}\label{eq:plug_evryth_in_2}
		\Prob\left[\abs{S\cap \left(C^{1/d}n\cdot T\right)}\geq \frac{\left(1+\frac{2^{d+5}}{\log n}\right)n^{d}}{m^{d}}\right]
		\leq\Gamma n^{3d/2}\exp\left(-\frac{(\log n)^{qd-2}}{\Gamma}\right).
	\end{equation}
	To aid in the verification of \eqref{eq:plug_evryth_in_1} and \eqref{eq:plug_evryth_in_2} we list the following utilised bounds on terms from \eqref{eq:bonnet_prob<a} and \eqref{eq:bonnet_prob>b}:
	\begin{align}
		\frac{\abs{X}-N}{\abs{X}\left(1-\frac{2}{M}\right)-N}&\leq 2,\label{eq:pesky1}\\
		\frac{\abs{X}-N}{\abs{X}-(a+\delta)N}&\geq \frac{1}{2},\label{eq:pesky2}\\
		\frac{\abs{X}-(b-2\delta)N}{\abs{X}-N}&\geq \frac{1}{2},\label{eq:pesky3}
	\end{align}
	We presently explain how to verify each of the bounds \eqref{eq:pesky1}--\eqref{eq:pesky3}: For \eqref{eq:pesky1}, first note that
	\begin{equation*}
		\frac{\abs{X}-N}{\abs{X}(1-\frac{2}{M})-N}\leq \frac{(C-1)}{C\left(1-\frac{2^{d}}{n}\right)\left(1-\frac{2}{M}\right)-1}\leq \frac{C-1}{(C-1)-C\left(\frac{2^{d}}{n}+\frac{2}{M}\right)},
	\end{equation*}
	and then observe that
	\begin{equation*}
		\\
		\frac{2^{d}}{n}+\frac{2}{M}<\frac{1}{\log n}<\frac{C-1}{2C}.
	\end{equation*}
	For \eqref{eq:pesky2}, observe that
	\begin{equation*}
		\frac{\abs{X}-N}{\abs{X}-(a+\delta)N}\geq \frac{C\left(1-\frac{2^{d}}{n}\right)-1}{C-(a+\delta)},
	\end{equation*}
	and the inequality $\frac{C\left(1-\frac{2^{d}}{n}\right)-1}{C-(a+\delta)}\geq\frac{1}{2}$ is equivalent to 
	\begin{equation*}
		C\geq\frac{1-\frac{(a+\delta)}{2}}{\frac{1}{2}-\frac{2^{d}}{n}}= 1+\frac{\frac{2^{d+4}}{\log n}-\frac{\delta}{2}+\frac{2^{d}}{n}}{\frac{1}{2}-\frac{2^{d}}{n}},
	\end{equation*}
	which evidently holds, in light of \eqref{eq:cond_m_C_q} and \eqref{eq:delta}. The verification of \eqref{eq:pesky3} can be done similarly to that of \eqref{eq:pesky2}.
	
	Having established \eqref{eq:plug_evryth_in_1} and \eqref{eq:plug_evryth_in_2}, we obtain \eqref{eq:lower_log} and \eqref{eq:upper_log} by summing \eqref{eq:plug_evryth_in_1} and \eqref{eq:plug_evryth_in_2} over $T\in\mc{T}_{m}$ and applying the bound $\abs{\mc{T}_{m}}=m^{d}\leq \frac{2^{d}n^{d}}{(\log n)^{qd}}$.
\end{proof}

\section{Proof of Main Result.}\label{sec:proof}
To finish this note, we give a proof of Theorem~\ref{thm:aas}. For the reader's convenience, we repeat the statement here:
\aas*
\begin{proof}
	In this proof we will adopt the same convention with the constant $\Gamma$ as used in the proof of Lemma~\ref{lemma:log_threshold_prob}, see the start of the proof of Lemma~\ref{lemma:log_threshold_prob} for an explanation. 	
	
	Set $l_{n}:=\floor{\log n -q\log\log n}$ and $m_{n}:=2^{l_{n}}$ for all $n\in\N$ starting at a certain threshold so that all expressions make sense. For the finitely many remaining $n$ we define $m_{n}$ in the same way, but set $l_{n}=1$.
	
	We set $C_{n}:=c_{n}^{d}$. The conditions of Lemma~\ref{lemma:log_threshold_prob} are satisfied for $d$, $n$, $m=m_{n}$, $C=C_{n}$ and $q$. Applying Lemma~\ref{lemma:log_threshold_prob}, we deduce that there is a constant $\Gamma=\Gamma(d)>0$ such that
	\begin{multline}\label{eq:well_dist_high_prob}
		\Prob\left[\frac{\left(1-\frac{\Gamma}{\log n}\right)n^{d}}{m_{n}^{d}}\leq \abs{S\cap \left(C_{n}^{1/d}n\cdot T\right)}\leq \frac{\left(1+\frac{\Gamma}{\log n}\right)n^{d}}{m_{n}^{d}}\quad \text{for all }T\in\mc{T}_{m}\right]\\
		\geq 1-\Gamma n^{\Gamma}\exp\left(-\frac{(\log n)^{qd-2}}{\Gamma}\right),
	\end{multline}
	for all $n\in\N$ and a random set $S\in \Omega_{n}$; see Convention~\ref{conv:randomset}. Let $\Lambda=\Lambda(d)>0$ and $\Delta=\Delta(d,\frac{1}{2})$ be the constants given by the conclusion of Lemma~\ref{lemma:well_dist_set_bound}. Then, combining \eqref{eq:well_dist_high_prob} and Lemma~\ref{lemma:well_dist_set_bound}, we conclude that
	\begin{equation*}
		\Prob\left[F_{n}> \Lambda\exp\left(\log n-l_{n}\left(1-\frac{2\Delta\Gamma}{\log n}\right)\right)\right]\leq \Gamma n^{\Gamma}\exp\left(-\frac{(\log n)^{qd-2}}{\Gamma}\right),
	\end{equation*}
	for all $n\in \N$. To finish the proof, it only remains to observe
	\begin{multline*}
		\exp\left(\log n-l_{n}\left(1-\frac{2\Delta\Gamma}{\log n}\right)\right)\leq \exp\left(\log n-(\log n-q\log\log n-1)\cdot\left(1-\frac{2\Delta\Gamma}{\log n}\right)\right)\\
		=\exp\left(2\Delta\Gamma +(q\log\log n+1)\left(1-\frac{2\Delta\Gamma}{\log n}\right)\right)\leq \Gamma (\log n)^{q}.
	\end{multline*}
	The `in particular' conclusion of Theorem~\ref{thm:aas} requires
	\begin{equation*}
		\lim_{n\to\infty}n^{\Gamma}\exp\left(-\frac{(\log n)^{qd-2}}{\Gamma}\right)=0,
	\end{equation*}
	which is satisfied, since $q>3/d$.
\end{proof}

\paragraph{Acknowledgements} The author would like to thank Vojt\v ech Kalu\v za for helpful discussions.

\bibliographystyle{plain}
\bibliography{citations}

\end{document}